\def\[{\begin{equation}}
\def\]{\end{equation}}
\newcommand{\RR}{\mathbb{R}}
\newcommand{\levelset}{\mathrm{\bf Lev}} 
\newcommand{\dom}[1]{\mathrm{\bf dom}\,{(#1)}} 
\newcommand{\crit}[1]{\mathrm{\bf crit}\,{(#1)}} 
\newcommand{\prox}{\mathrm{\bf Prox}} 
\newcommand{\dist}{\mathrm{\bf dist}} 
\newcommand{\intset}[1]{\mathrm{\bf int}\,{(#1)}} 
\newcommand{\wxx}{\widehat{x}}
\newcommand{\xii}{\xi}
\newtheorem{assumption}{Assumption}
\def\B{\mathscr{B}}
\def\X{{\mathcal{X}}}
\newcommand{\wo}{\widehat{\omega}}
\newcommand{\wO}{\widehat{\Omega}}
\newcommand{\nn}{\nonumber}
\def\nn{\nonumber}
\def\B{{\mathscr B}}
\newtheorem{theorem}{Theorem}[section]
\newtheorem{lemma}[theorem]{Lemma}
\theoremstyle{definition}
\newtheorem{definition}[theorem]{Definition}
\newtheorem{remark}[theorem]{Remark}
\numberwithin{equation}{section}
\begin{document}
\setcounter{page}{1}

\vspace*{2.0cm}
\title[An Inertial Bregman Proximal DC Algorithm for Generalized DC Programming]
{An Inertial Bregman Proximal DC Algorithm for Generalized DC Programming with Application to Data Completion}
\author[Chenjian Pan, Yingxin Zhou, Hongjin He, Chen Ling]{ Chenjian Pan$^{1}$, Yingxin Zhou$^{1}$, Hongjin He$^{1*}$, Chen Ling$^{2}$}
\maketitle
\vspace*{-0.6cm}

\begin{center}
{\footnotesize

$^1$School of Mathematics and Statistics, Ningbo University, Ningbo, 315211, China.\\
$^2$Department of Mathematics, Hangzhou Dianzi University, Hangzhou, 310018, China.

}\end{center}

\vskip 4mm {\footnotesize \noindent {\bf Abstract.}
In this paper, we consider a class of generalized difference-of-convex functions (DC) programming, whose objective is the difference of two convex (not necessarily smooth) functions plus a decomposable (possibly nonconvex) function with Lipschitz gradient. By employing the Fenchel-Young inequality and Moreau decomposition theorem, we introduce an inertial Bregman proximal DC algorithm to solve the problem under consideration. Our algorithmic framework is able to fully exploit the decomposable structure of the generalized DC programming such that each subproblem of the algorithm is enough easy in many cases. Theoretically, we show that the sequence generated by the proposed algorithm globally converges to a critical point under the Kurdyka-{\L}ojasiewicz condition. A series of numerical results demonstrate that our algorithm runs efficiently on matrix and tensor completion problems.

 \noindent {\bf Keywords.}
Difference-of-convex programming; Fenchel-Young inequality; Moreau decomposition; Bregman distance; tensor completion.

 \noindent {\bf 2020 Mathematics Subject Classification.}
65K10, 90C26. }

\renewcommand{\thefootnote}{}
\footnotetext{ $^*$Corresponding author.
\par
E-mail address: hehongjin@nbu.edu.cn (Hongjin He).
\par

}

\section{Introduction}\label{Introduction}

In this paper, we are interested in a class of generalized difference-of-convex functions (DC) programming, which refers to
\begin{align}\label{priminal problem}
\min_{x\in\RR^n}~\Phi(x) := f(x)-g(x)+h(x),
\end{align}
where $f(x)$ and $g(x)$ are proper closed convex functions, and $h(x)$ is a differentiable (possibly nonconvex) function, which is further assumed to be liberally decomposable in the sense of $h(x)=h^+(x)-h^-(x)$ with Lipschitz continuity modulus $L_h^+$ and $L_h^-$ for $h^+(x)$ and $h^-(x)$, respectively. The generalized DC programming \eqref{priminal problem} has widespread applications in signal/image processing and statistical/machine learning \cite{AEB06,LTPD15,LTPD18,SHWJ23,YLHX15}, to name just a few.

When the last decomposable part $h(x)$ in \eqref{priminal problem} vanishes, a benchmark solver to circumvent the nonconvexity of the resulting problem is the so-named DC algorithm (DCA) originally proposed by Pham Dinh \cite{PDS86} (see also \cite{PDLT97}), where $-g(x)$ is replaced by its linear approximation at $x^k$ so that the DCA enjoys a convex subproblem. Naturally, a straightforward application of the DCA to \eqref{priminal problem} produces the following iterative scheme:
\begin{align}\label{DCA-one}
x^{k+1}\in\arg\min_{x \in \RR^n}~\left\{f(x)+h(x)-\langle \xi^{k+1} ,x-x^k\rangle\right\},
\end{align}
where  $\xi^{k+1}$ is a subgradient of $g(x)$ at the iterate $x^k$. However, the iterative scheme \eqref{DCA-one} is not necessarily implementable in practice due to the simultaneous appearance of $f(x)$ and $h(x)$, especially the possible nonconvexity of $h(x)$. Hence, one more reasonable way is to exploit the differentiability of $h(x)$. In this way, a customized application of the DCA to \eqref{priminal problem} yields the following iterative scheme:
\begin{align}\label{subproblem-DCA}
x^{k+1}\in\arg\min_{x \in \RR^n}~\left\{f(x)+\langle \nabla h(x^k)-\xi^{k+1} ,x-x^k\rangle\right\},
\end{align}
where $\nabla h(x^k)$ is the gradient of $h(x)$ at the iterate $x^k$. Clearly, \eqref{DCA-one} and \eqref{subproblem-DCA} present two different applications of the original DCA. However, \eqref{subproblem-DCA} is more implementable than \eqref{DCA-one} in many cases. To some extent, we can regard the seminal DCA as an algorithmic paradigm. Therefore, how to exploit the DC decomposition in algorithmic design and implementation is of importance for efficiently solving DC programming and general nonconvex optimization problems \cite{PDLT97}. Consequently, as studied in \cite{GTT17}, we can also reformulate \eqref{priminal problem} as a canonical DC programming: 
\begin{align}\label{nDCR}
\min_{x\in\mathbb{R}^n}\; \Phi(x) := \underbrace{\left( f(x)+\frac{L_h}{2}\|x\|^2 \right)}_{\widehat{f}(x)}-\underbrace{\left( g(x)-h(x)+\frac{L_h}{2}\|x\|^2 \right)}_{\widehat{g}(x)},
\end{align}
where $L_h$ is the Lipschitz continuity constant of $h(x)$ so that both $\widehat{f}(x)$ and $\widehat{g}(x)$ are convex functions.
Then, directly applying the DCA to \eqref{nDCR} leads to 
\begin{align}\label{subproblem-PDCA}
x^{k+1}=\arg\min_{x \in \RR^n}~\left\{f(x)+\frac{L_h}{2}\left\|x-x^k+\frac{1}{L_h}\left(\nabla h(x^k)-\xi^{k+1}\right)\right\|^2\right\},
\end{align}
which shares the same idea of the proximal DCA introduced in \cite{SSC03}. Moreover, such a scheme immediately reduces to the classical proximal gradient method \cite{BT09} when $g(x)$ vanishes. Comparatively, \eqref{subproblem-PDCA} is more practical than \eqref{DCA-one} and \eqref{subproblem-DCA} for dealing with nonsmooth optimization problems. Although \eqref{subproblem-PDCA} enjoys an easier subproblem, it is essentially a gradient-like algorithm which runs slowly in many cases (e.g., see \cite{OC15} for the discussion on convex optimization). Therefore, many researchers are motivated to develop faster algorithms for DC programming, e.g., see  \cite{CHZ22,LPT19,LZ19,LZS19,PLT18,PLT23,TFT22,WCP18} and references therein. 

In this paper, we follow the extrapolation idea and the Bregman proximal technique to propose an inertial Bregman Proximal DC Algorithm (iBPDCA) for the generalized DC programming \eqref{priminal problem}. It is noteworthy that most DCA-like algorithms must select a subgradient $\xi^{k+1}$, which is an element of the subdifferential $\partial g(x^k)$ when dealing with nonsmooth DC programming. In this situation, it is not an easy task to determine which one is the most ideal subgradient for approximating $-g(x)$. Therefore, to circumvent this difficulty, we accordingly employ the well known Fenchel-Young inequality to introduce a surrogate for $-g(x)$. Then, we add a proximal term to enhance the subproblem for updating $\xi^{k+1}$. One direct benefit is that we will no longer worry about the selection of the subgradient since it will be updated uniquely as long as $g(x)$ is convex. Another remarkable benefit is that we can directly exploit the possibly explicit proximal operator of $g(x)$ by the extended Moreau decomposition theorem \cite{Beck17}, which is able to make our algorithm more implementable in practice (see Section \ref{Sec:Num}).  Moreover, we shall highlight that our algorithm also possesses an extrapolation step for the algorithmic acceleration, which will be verified in Section \ref{Sec:Num}. Thirdly, we should mention that $h(x)$ is assumed to be liberally decomposable of the form $h(x)=h^+(x)-h^-(x)$. Such a decomposable form allows us to freely reformulate the DC optimization model so that which, together with the Bregman proximal term, can gainfully help us design customized algorithms for solving the problem under consideration. Theoretically, we show that the sequence generated by the proposed iBPDCA converges to a critical point of \eqref{priminal problem}. Finally, we apply the iBPDCA to low-rank matrix and tensor data completion. A series of computational results demonstrate that our algorithm works well in solving DC optimization problems.

The structure of this paper is divided into five parts. In Section \ref{Sec:Pre}, we summarize some notations and recall some basic definitions and properties. In Section \ref{Sec:Alg}, we introduce our iBPDCA algorithm for \eqref{priminal problem} and study its convergence.  In Section \ref{Sec:Num}, we conduct the numerical performance of our iBPDCA on data completion. Finally, some concluding remarks are summarized in Section \ref{Sec:Con}.

\section{Preliminaries} \label{Sec:Pre}
In this section, we briefly review several basic notions, definitions, and properties 
 of Kurdyka-{\L}ojasiewicz inequality and other related mathematical tools that will be used throughout this paper.

We denote $\RR^n$ as the $n$-dimensional Euclidean space with inner product $\langle \cdot,\cdot\rangle$, and we use $\|\cdot\|_1,\; \|\cdot\|,\; \|\cdot\|_*$ and $\|\cdot\|_F$ to denote the $\ell_1$ norm, $\ell_2$ norm, nuclear norm and  Frobenius norm for vectors or matrices, respectively. For any subset $\mathbb{S}\subset \RR^n$ and any point $x\in\RR^n$, $\dist(x,\mathbb{S})=\inf\,\left\{ \| y-x\|\; |\; y\in\mathbb{S}\right\}$ is used to define the distance from $x$ to $\mathbb{S}$. As for a matrix $X\in\mathbb{R}^{m\times n}$, the transpose of $X$ is denoted by $X^\top$.

For an extended-real-value function $f:\RR^n\to[-\infty,+\infty]$, we denote the domain and level set of $f$ by $\dom{f} := \left\{x\in \RR^n\;|\;f(x) <\infty\right\}\; \text{and}\; \levelset_f(\alpha_k)=\{x\in \RR^n\;|\; f(x) \le \alpha_k\},$ respectively. It is documented in \cite{RW98} that a proper closed function $f$ is said to be level-bounded if $\levelset_f(\alpha_k)$ is bounded. Moreover, a proper function is closed if it is lower semicontinuous.
\begin{definition}\label{def:subdiff}
	Let $f: \RR^n \to (-\infty,\infty]$ be a proper and lower semicontinuous function.
	\begin{enumerate}
		\item[\rm (i)] For each $x \in \dom{f}$, the Fr\'{e}chet subdifferential $\widehat{\partial}f(x)$ of $f$ at $x$ is defined by
		\begin{equation*}
		\widehat{\partial}f(x)  :=  \left\{ \xi\in\RR^n \;\Big{|}\; \liminf_{\substack{y\ne x\\ y\to x}} \frac{f(y)-f(x) -\langle \xi,y-x \rangle}{\|y-x\|} \ge 0\right\}.
		\end{equation*}
		In particular, for $x \notin \dom{f}$, we set $\widehat{\partial}f(x)  = \emptyset$.
		\item[\rm (ii)] The limiting subdifferential $\partial f(x)$ of $f$ at $x\in \dom{f}$ is defined by
		\begin{equation*}
		\partial f(x)  := \left\{\xi \in \RR^n\;\Big{|}\; \begin{array}{l}
		\exists \left(x^k,f(x^k)\right) \to \left(x,f(x)\right), \xi^k \in \widehat{\partial}f(x^k) \\ \text{such that  }\; \xi^k\to \xi\; \text{  as } \;k\to \infty 
		\end{array} \right\}.
		\end{equation*}
	\end{enumerate}
\end{definition}
If $f$ is continuously differentiable, then $\partial f=\{\nabla f\}$ (e.g., see \cite[Exercise 8.8(b)]{RW98}), where $\nabla f$ is denoted as the gradient of $f$. Moreover, when $f$ is convex, the limiting subdifferential reduces to the classical subdifferential in convex analysis (see \cite[Proposition 8.12]{RW98}).
Apart from that, Definition~\ref{def:subdiff} implies $\widehat{\partial}f  \subset \partial f $ for each $x \in \RR^n$. It is worth noting that the set $\widehat{\partial}f $ is both convex and closed while $ \partial f$ is closed (e.g., see \cite[Theorem 8.6]{RW98}). 

A necessary but not sufficient condition for $x^{\star} \in \RR^n$ to be a local minimizer of $f$ is that $0 \in \partial f(x^{\star})$, where $x^{\star}$ is called a stationary point of $f$. Throughout this paper, the set of stationary points of $f$ is denoted by $\Xi({f})$. Additionally, as defined in the literature (e.g., \cite{LTPD18,WCP18}), we say that $x^\star \in \RR^n$ is a critical point of $\Phi:=f-g+h$ given in \eqref{priminal problem} if 
\begin{equation}\label{def:cp}
0\in \partial f(x^\star)-\partial g(x^\star)+\nabla h(x^\star),
\end{equation}
or equivalently, $\left\{\partial f(x^\star)+\nabla h(x^\star)\right\}\cap \partial g(x^\star)\neq \emptyset$. Then, the set of critical points of $\Phi$ is denoted by $\crit{\Phi}$. Here, we should emphasize that a stationary point of $\Phi$, i.e., $0\in \partial(f-g)(x^\star)+\nabla h(x^\star)$, is sharper than a critical point of $\Phi$ satisfying \eqref{def:cp}. If $g$ is assumed to be continuously differentiable over $\dom{f+h}$, i.e., $\partial g(x^\star)=\{\nabla g(x^\star)\}$, then a critical point of $\Phi$ is also a stationary point of $\Phi$ (see \cite{deO20}). In our paper, we do not assume $g$ being a smooth function and adopt the critical point \eqref{def:cp} for simplicity.

Before stating the Kurdyka-{\L}ojasiewicz (K{\L}) property, we first use $\varUpsilon_\zeta$ to denote the set of all concave continuous functions $\phi : [0,\zeta) \to \RR_+$ that are continuously differentiable on $(0, \zeta)$ with positive derivatives and satisfy $\phi(0) =0$.
Below, we recall the widely used K{\L} property (e.g., see \cite[Definition 3.1]{ABRS10}) that plays an instrumental role in convergence analysis of many nonconvex optimization methods.
\begin{definition}[K{\L} property and K{\L} function]\label{def:KL}
	Let $f: \RR^n \to (-\infty,\infty]$ be a proper and lower semicontinuous function.
	\begin{enumerate}
		\item [\rm (i)] We say that the function $f$ has the K{\L} property at 
		$\bar{x} \in \dom{\partial f}$
		if there exist $\zeta \in (0,\infty]$, a neighborhood $\mathcal{N}$ of $\bar{x}$ and a continuous concave function $\phi\in \varUpsilon_\zeta$ such that for all $x \in \mathcal{N}$ with $f(\bar{x}) < f(x) < f(\bar{x}) + \zeta $, the following K{\L} inequality holds, i.e.,
		\begin{equation*}
		\phi^\prime \left(f(x)-f(\bar{x})\right) \dist\left(0,\partial f(x)\right) \ge 1.
		\end{equation*}
		\item [\rm (ii)] If $f$ satisfies the above K{\L} property at each point in $\dom{\partial f}$, then $f$ is called a K{\L} function.
	\end{enumerate}
\end{definition}
Hereafter, we recall the uniformized K{\L} property introduced in \cite[Lemma 6]{BST14}.
\begin{lemma}[uniformized K{\L} property]\label{lem:UKL}
	Let $\Gamma$ be a compact set and let $f: \RR^n \to (-\infty,\infty]$ be a proper and lower semicontinuous function. Assume that $f$ is constant on $\Gamma$ and satisfies the K{\L} property at each point of $\Gamma$. Then, there exist $\varsigma>0$, $\zeta>0$ and $\phi\in \varUpsilon_\zeta$ such that for each $\bar{x} \in \Gamma$ the following K{\L} inequality
	\begin{equation*}
	\phi^\prime\left(f(x)-f(\bar{x}) \right) \dist \left(0,\partial f(x)\right) \ge 1
	\end{equation*}
	holds for any
	$	x \in \left\{ x\in \RR^n\;|\; \dist (x,\Gamma) <\varsigma \right\} \cap \left\{ x \in \RR^n\;|\; f(\bar{x}) < f(x) < f(\bar{x}) + \zeta\right\}.$
\end{lemma}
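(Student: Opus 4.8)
The plan is to prove the statement by a local-to-global compactness argument, leveraging the hypothesis that $f$ is constant on $\Gamma$. First I would denote by $f^\star$ the common value of $f$ on $\Gamma$, so that for every $\bar{x}\in\Gamma$ the level band appearing in the pointwise K{\L} property collapses to the single, $\bar{x}$-independent strip $\{x : f^\star < f(x) < f^\star + \zeta_{\bar{x}}\}$. Invoking Definition~\ref{def:KL} at each $\bar{x}\in\Gamma$ then furnishes a radius $\zeta_{\bar{x}}>0$, an open neighborhood $\mathcal{N}_{\bar{x}}$ (which I may shrink so that an open ball $B(\bar{x},2\epsilon_{\bar{x}})\subset\mathcal{N}_{\bar{x}}$), and a desingularizing function $\phi_{\bar{x}}\in\varUpsilon_{\zeta_{\bar{x}}}$ satisfying $\phi_{\bar{x}}^\prime(f(x)-f^\star)\,\dist(0,\partial f(x))\ge 1$ throughout $\mathcal{N}_{\bar{x}}\cap\{f^\star < f(x) < f^\star+\zeta_{\bar{x}}\}$.

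Next I would exploit compactness of $\Gamma$. The balls $\{B(\bar{x},\epsilon_{\bar{x}})\}_{\bar{x}\in\Gamma}$ form an open cover of $\Gamma$, so I extract a finite subcover $B(\bar{x}_1,\epsilon_1),\dots,B(\bar{x}_p,\epsilon_p)$. Setting $\varsigma:=\min_{1\le i\le p}\epsilon_i$ and $\zeta:=\min_{1\le i\le p}\zeta_{\bar{x}_i}$, a routine triangle-inequality estimate shows that every $x$ with $\dist(x,\Gamma)<\varsigma$ lies in some $B(\bar{x}_i,2\epsilon_i)\subset\mathcal{N}_{\bar{x}_i}$: picking $\bar{y}\in\Gamma$ with $\|x-\bar{y}\|<\varsigma$ and an index $i$ with $\bar{y}\in B(\bar{x}_i,\epsilon_i)$ gives $\|x-\bar{x}_i\|<\varsigma+\epsilon_i\le 2\epsilon_i$. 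This converts the $p$ separate local neighborhoods into one uniform tube of radius $\varsigma$ around $\Gamma$.

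The crux is the construction of a single desingularizing function that dominates all the $\phi_{\bar{x}_i}$ simultaneously. I would define $\phi$ through its derivative by
\begin{equation*}
\phi^\prime(s):=\max_{1\le i\le p}\phi_{\bar{x}_i}^\prime(s),\qquad \phi(s):=\int_0^s\phi^\prime(t)\,dt,\quad s\in[0,\zeta).
\end{equation*}
Here lies the main obstacle, namely verifying $\phi\in\varUpsilon_\zeta$: the pointwise maximum of concave functions is in general not concave, so I cannot simply set $\phi:=\max_i\phi_{\bar{x}_i}$. The resolution is to work at the level of derivatives. Each $\phi_{\bar{x}_i}^\prime$ is positive and, by concavity of $\phi_{\bar{x}_i}$, non-increasing; the pointwise maximum of finitely many positive, continuous, non-increasing functions is again positive, continuous, and non-increasing. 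Consequently $\phi$ defined by the displayed integral is concave, continuous on $[0,\zeta)$, continuously differentiable on $(0,\zeta)$ with positive derivative, and satisfies $\phi(0)=0$, whence $\phi\in\varUpsilon_\zeta$.

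Finally I would verify the uniform inequality. Fix any $\bar{x}\in\Gamma$ and any $x$ with $\dist(x,\Gamma)<\varsigma$ and $f^\star < f(x) < f^\star+\zeta$. The tube argument places $x$ in some $\mathcal{N}_{\bar{x}_i}$, and since $f(\bar{x}_i)=f^\star$ and $\zeta\le\zeta_{\bar{x}_i}$, the local K{\L} inequality at $\bar{x}_i$ applies and yields $\phi_{\bar{x}_i}^\prime(f(x)-f^\star)\,\dist(0,\partial f(x))\ge 1$. Because $\phi^\prime(f(x)-f^\star)\ge\phi_{\bar{x}_i}^\prime(f(x)-f^\star)$ by construction and $\dist(0,\partial f(x))\ge 0$, I obtain $\phi^\prime(f(x)-f(\bar{x}))\,\dist(0,\partial f(x))\ge 1$ upon using $f(\bar{x})=f^\star$. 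As $\bar{x}\in\Gamma$ was arbitrary, this is exactly the asserted uniform inequality, completing the proof.
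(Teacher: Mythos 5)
The paper never proves this lemma: it is recalled verbatim from the literature (\cite{BST14}, Lemma 6), so the only benchmark is the standard Bolte--Sabach--Teboulle argument, and your proposal is a correct reconstruction of it with one genuine variation. You reproduce the same skeleton: use that $f\equiv f^\star$ on $\Gamma$ to make all the level strips coincide, cover the compact set $\Gamma$ by finitely many K{\L} neighborhoods $B(\bar{x}_i,\epsilon_i)$, set $\varsigma=\min_i \epsilon_i$, $\zeta=\min_i \zeta_{\bar{x}_i}$, and use the triangle inequality to place every point of the tube $\{x\,|\,\dist(x,\Gamma)<\varsigma\}$ inside some $\mathcal{N}_{\bar{x}_i}$. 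The difference is the construction of the single desingularizing function: the cited proof simply takes the sum $\phi:=\sum_{i=1}^p\phi_{\bar{x}_i}$, whose derivative dominates each $\phi_{\bar{x}_i}'$ because all the derivatives are positive, and which lies in $\varUpsilon_\zeta$ trivially as a finite sum of members of $\varUpsilon_\zeta$; you instead integrate the pointwise maximum of the derivatives, $\phi(s)=\int_0^s\max_i\phi_{\bar{x}_i}'(t)\,dt$, recovering concavity from the fact that a maximum of finitely many positive, continuous, non-increasing functions is again positive, continuous and non-increasing. Your route is valid (and even yields a pointwise smaller, hence sharper, $\phi$), but it leaves one small hole that the sum construction avoids automatically: you never verify that $\max_i\phi_{\bar{x}_i}'$ is integrable near $0$, which is needed for $\phi$ to be finite and for $\phi(0)=0$ with continuity at $0$, since the $\phi_{\bar{x}_i}'$ may blow up there (e.g.\ $\phi_{\bar{x}_i}(s)=\sqrt{s}$). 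The patch is one line: $\max_i\phi_{\bar{x}_i}'\le\sum_i\phi_{\bar{x}_i}'$, hence $0\le\phi(s)\le\sum_i\phi_{\bar{x}_i}(s)\to 0$ as $s\downarrow 0$, giving finiteness, $\phi(0)=0$, and continuity at the origin; with that added, your proof is complete.
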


Note that we assume $h(x)$ being differentiable. Here, we recall the famous descent lemma for the so-called $L$-smooth optimization problems (see  more details in \cite[Lemma 5.7]{Beck17}).
\begin{lemma}\label{lem:delem}
	Let $f:\RR^n\to(-\infty,\infty]$ be an $L$-smooth function  ($L\geq 0$) 
	over a given convex set $\mathbb{S}$, i.e., $f$ is differentiable over $\mathbb{S}$ and satisfies
	$$\| \nabla f(x) -\nabla f(y)\| \leq L \|x-y\|, \quad \forall x,y \in \mathbb{S}.$$
	Then, for any $x,y\in\mathbb{S}$, we have 
	$$f(y)\leq f(x) + \langle \nabla f(x),y-x\rangle + \frac{L}{2}\|x-y\|^2.$$
\end{lemma}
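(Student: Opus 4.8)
The plan is to reduce the multivariate statement to a one-dimensional computation along the segment joining $x$ and $y$, using the convexity of $\mathbb{S}$ to guarantee that this segment stays inside the region where $f$ is differentiable and its gradient is $L$-Lipschitz. First I would fix $x,y\in\mathbb{S}$ and introduce the scalar function $\varphi(t):=f(x+t(y-x))$ for $t\in[0,1]$. Convexity of $\mathbb{S}$ ensures $x+t(y-x)\in\mathbb{S}$ for every $t\in[0,1]$, so $\varphi$ is well defined, and by the chain rule it is differentiable with $\varphi'(t)=\langle \nabla f(x+t(y-x)),\,y-x\rangle$.

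Next I would invoke the fundamental theorem of calculus. Since $\nabla f$ is $L$-Lipschitz on $\mathbb{S}$ it is in particular continuous along the segment, so $\varphi'$ is continuous and
$$f(y)-f(x)=\varphi(1)-\varphi(0)=\int_0^1 \varphi'(t)\,dt=\int_0^1 \langle \nabla f(x+t(y-x)),\,y-x\rangle\,dt.$$
Subtracting the linear term $\langle \nabla f(x),y-x\rangle$ from both sides and regrouping the integrand as a difference of gradients gives
$$f(y)-f(x)-\langle \nabla f(x),y-x\rangle=\int_0^1 \langle \nabla f(x+t(y-x))-\nabla f(x),\,y-x\rangle\,dt.$$

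Then I would estimate the right-hand side. Applying the Cauchy-Schwarz inequality inside the integral followed by the $L$-smoothness bound $\|\nabla f(x+t(y-x))-\nabla f(x)\|\le L\,\|t(y-x)\|=Lt\|y-x\|$ yields
$$f(y)-f(x)-\langle \nabla f(x),y-x\rangle \le \int_0^1 Lt\|y-x\|^2\,dt=\frac{L}{2}\|x-y\|^2,$$
and rearranging delivers the claimed inequality. The argument is essentially routine; the only point demanding care—and the nearest thing to an obstacle—is verifying that the line segment $\{x+t(y-x):t\in[0,1]\}$ lies entirely in $\mathbb{S}$. This is precisely where the convexity hypothesis on $\mathbb{S}$ is used, and without it neither the chain rule identity for $\varphi'$ nor the pointwise Lipschitz bound along the path would be available.
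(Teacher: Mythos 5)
Your proof is correct and is essentially the same argument as the one behind the paper's citation: the paper does not prove this lemma itself but refers to \cite[Lemma 5.7]{Beck17}, whose proof is exactly your line-segment parametrization combined with the fundamental theorem of calculus, the Cauchy--Schwarz inequality, and the Lipschitz bound on $\nabla f$. Nothing further is needed.
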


Now, we present the definition of conjugate function, which plays a significant role in finding a surrogate of $-g(x)$ for our algorithmic design.
\begin{definition}\label{def:conf}
	Let $f:\RR^n \to [-\infty,\infty]$ be an extended real-valued function. The conjugate function of $f$ is defined by
	\begin{equation*}
	f^*(y) = \sup_{x\in \RR^n} \left\{ \langle y, x \rangle - f(x) \right\}, \; y \in \RR^n.
	\end{equation*}
\end{definition}
From \cite[Chapter 4]{Beck17}, $f^*$ is proper, lower semicontinuous and convex provided $f$ is proper, lower semicontinuous and convex. Furthermore, Definition \ref{def:conf} implies
\begin{equation}\label{eq:FY}
f(x)  + f^*(y) \ge \langle x,y \rangle
\end{equation}
holds. Especially, the equality holds if and only if $y \in \partial f(x) $. Besides, if $f$ is proper, closed and convex, then for any $x$ and $y$, one has $y \in \partial f(x) $ if and only if $x \in \partial f^*(y)$ (e.g., see \cite[Theorem 4.20]{Beck17}). In the literature, \eqref{eq:FY} is often referred to as the Fenchel-Young inequality, which will be vital for our algorithmic design.

\begin{definition}[proximal mapping]\label{def:prox}
	Given a function $\vartheta :\RR^n\to (-\infty,\infty]$ and a scalar $t>0$, the proximal mapping of $t\vartheta $ is the operator given by
	\begin{equation}\label{eq:prox}
	\prox_{t\vartheta }({\bm a})=\arg\min_{x\in\RR^n}\left\{t\vartheta (x)+\frac{1}{2}\|x-{\bm a}\|^2\right\} \quad \text{for any } {\bm a}\in\RR^n.
	\end{equation}
\end{definition}

\begin{theorem}[extended Moreau decomposition theorem]\label{the:moreau}
	Let $f: \RR^n\to (-\infty,\infty]$ be proper closed and convex, and let the scalar $t>0$. Then for any $x\in \RR^n$, we have
	\begin{align*}
		\prox_{tf}(x)+t\prox_{t^{-1}f^*}\left(\frac{x}{t}\right) = x.
	\end{align*}
\end{theorem}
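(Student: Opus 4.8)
The plan is to characterize each proximal map by its first-order optimality condition and then link the two conditions through the conjugate subgradient relation recalled just after \eqref{eq:FY}. First I would set $u := \prox_{tf}(x)$. Since $f$ is proper, closed and convex, the objective $tf(w)+\frac{1}{2}\|w-x\|^2$ appearing in \eqref{eq:prox} is strongly convex, so $u$ is its unique minimizer and is characterized by the inclusion $0\in t\,\partial f(u)+(u-x)$; here the sum rule for subdifferentials applies because the quadratic term is smooth. Rearranging yields $\frac{x-u}{t}\in\partial f(u)$.

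Next I would translate this membership into a statement about $f^*$. By the conjugate subgradient equivalence recalled below \eqref{eq:FY}, namely $y\in\partial f(x)\iff x\in\partial f^*(y)$ for proper closed convex $f$, the previous inclusion is equivalent to $u\in\partial f^*\!\big(\tfrac{x-u}{t}\big)$. Setting $v:=\frac{x-u}{t}$, so that $tv=x-u$ and hence $x-tv=u$, this reads $x-tv\in\partial f^*(v)$; dividing by $t$ and rearranging gives $0\in t^{-1}\partial f^*(v)+(v-\tfrac{x}{t})$. This is precisely the optimality condition identifying $v$ as the unique minimizer of $t^{-1}f^*(w)+\frac{1}{2}\|w-\tfrac{x}{t}\|^2$, i.e. $v=\prox_{t^{-1}f^*}(\tfrac{x}{t})$. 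Substituting back then gives $\prox_{tf}(x)+t\,\prox_{t^{-1}f^*}(\tfrac{x}{t})=u+tv=u+(x-u)=x$, as desired.

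The main obstacle is conceptual rather than computational: the one non-routine ingredient is the conjugate subgradient equivalence $y\in\partial f(x)\iff x\in\partial f^*(y)$, which is exactly what transports the optimality condition of the $tf$-proximal problem to that of the $t^{-1}f^*$-problem, and which crucially requires $f$ to be closed so that $f^{**}=f$. A secondary point I would state explicitly is the single-valuedness of both proximal maps: it follows from strong convexity of the two proximal objectives and is what licenses the identification of $v$ with $\prox_{t^{-1}f^*}(\tfrac{x}{t})$ directly from the optimality condition.
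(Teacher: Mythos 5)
Your proof is correct, but there is nothing in the paper to compare it against: the paper states Theorem \ref{the:moreau} as a known result imported from \cite{Beck17} and gives no proof of its own. Your argument is the direct one: characterize $u=\prox_{tf}(x)$ by the inclusion $\frac{x-u}{t}\in\partial f(u)$, transport it through the conjugate subgradient equivalence $y\in\partial f(x)\iff x\in\partial f^*(y)$ (recorded in the paper right after \eqref{eq:FY}), and recognize the resulting inclusion $0\in t^{-1}\partial f^*(v)+\bigl(v-\tfrac{x}{t}\bigr)$ as the optimality condition defining $\prox_{t^{-1}f^*}\bigl(\tfrac{x}{t}\bigr)$. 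This differs from the usual textbook route in \cite{Beck17}, which first proves the unscaled Moreau decomposition $\prox_{f}(x)+\prox_{f^*}(x)=x$ and then obtains the extended version by combining it with the conjugate scaling identity $(tf)^*(y)=tf^*(y/t)$ and a prox calculus rule; your proof absorbs the scaling directly into the optimality conditions, so it is self-contained and needs only the conjugate subgradient theorem, at the cost of not reusing any calculus rules. Two ingredients you use implicitly are worth flagging explicitly: properness, closedness and convexity of $f^*$ (stated in the paper after Definition \ref{def:conf}), which is what makes the second proximal objective proper and strongly convex so that its minimizer exists, is unique, and is characterized by the stated inclusion; and the sum rule $\partial\bigl(tf+\tfrac12\|\cdot-x\|^2\bigr)(u)=t\partial f(u)+(u-x)$, which is legitimate here because the quadratic term is finite-valued and smooth. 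With those two remarks made, the argument is complete.
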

This theorem connects the proximal operator of proper closed convex functions and their
conjugates.

We conclude this section by introducing the definition of Bregman distance and its associated properties (see \cite{Bregman-1967} and \cite[Chapter 9]{Beck17}). These concepts will serve as valuable tools for our algorithmic design and convergence analysis.
\begin{definition}[Bregman distance]\label{def:Bregman}
	Let $\psi: \RR^n \to (-\infty, + \infty]$ be a strongly convex and continuously differentiable function over $\dom{\partial \psi}$. The Bregman distance associated with the kernel $\psi$ is the function $\B_\psi:\dom{\psi}\times \dom{\partial \psi}\to \RR$ given by
	\begin{equation*}
	\B_\psi(x,y) = \psi(x) - \psi(y) - \langle \nabla \psi(y), x-y\rangle,\quad \forall x \in \dom{\psi}, y \in \intset{\dom{\psi}}.
	\end{equation*}
\end{definition}

\begin{lemma}\label{lem:Bregman}
	Suppose that $\mathbb{S}\subseteq \RR^n$ is nonempty, closed and convex, and the function $\psi$ is proper closed convex and differentiable over $\dom{\partial \psi}$. If $\mathbb{S}\subseteq \dom{\psi}$ and $\psi+\mathcal{I}_{\mathbb{S}}$ with $\mathcal{I}_{\mathbb{S}}$ being an indicator function associated with $\mathbb{S}$ is $\varrho$-strongly convex ($\varrho>0$), then the Bregman distance $\B_\psi$ associated with $\psi$ has the following properties:
	\begin{itemize}
		\item[\rm (i)] $\B_{\psi}(x,y)\geq \frac{\varrho}{2}\|x-y\|^2$ for all $x\in\mathbb{S}$ and $y\in\mathbb{S}\cap \dom{\partial \psi}$;
		\item[\rm (ii)] Let $x\in\mathbb{S}$ and $y\in\mathbb{S}\cap \dom{\partial \psi}$. Then $\B_{\psi}(x,y)\geq 0$, and in particular, the equality holds if and only if $x=y$;
		\item[\rm (iii)] If $\nabla \psi$ is Lipschitz continuous with modulus $L_\psi$, it holds that $\B_{\psi}(x,y)\leq \frac{L_\psi}{2}\|x-y\|^2$ for all $x\in\mathbb{S}$ and $y\in\mathbb{S}\cap \dom{\partial \psi}$.
	\end{itemize}
\end{lemma}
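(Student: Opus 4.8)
The plan is to establish the three items in sequence, anchoring all of them to the defining identity $\B_\psi(x,y) = \psi(x) - \psi(y) - \langle \nabla \psi(y), x-y\rangle$. Item (ii) will drop out at once from (i), so the real work is the two quadratic estimates: the lower bound in (i) will come from the $\varrho$-strong convexity of $\psi + \mathcal{I}_{\mathbb{S}}$, while the upper bound in (iii) will come from the descent lemma (Lemma~\ref{lem:delem}).

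For part (i), I would set $\theta := \psi + \mathcal{I}_{\mathbb{S}}$ and invoke the first-order subgradient characterization of $\varrho$-strong convexity: for any $\xi \in \partial \theta(y)$ one has $\theta(x) \ge \theta(y) + \langle \xi, x-y\rangle + \frac{\varrho}{2}\|x-y\|^2$ for every $x$. The crucial observation is that $\nabla \psi(y)$ is an admissible choice of $\xi$. Indeed, since $\psi$ is differentiable at $y \in \mathbb{S} \cap \dom{\partial \psi}$ and $\mathcal{I}_{\mathbb{S}}$ is convex, the sum rule yields $\partial \theta(y) = \nabla \psi(y) + N_{\mathbb{S}}(y)$, and because $0 \in N_{\mathbb{S}}(y)$ for every $y \in \mathbb{S}$, we get $\nabla \psi(y) \in \partial \theta(y)$. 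Substituting $\xi = \nabla \psi(y)$ and using that $x, y \in \mathbb{S}$ forces $\mathcal{I}_{\mathbb{S}}(x) = \mathcal{I}_{\mathbb{S}}(y) = 0$, so $\theta(x) = \psi(x)$ and $\theta(y) = \psi(y)$; rearranging the resulting inequality gives exactly $\B_\psi(x,y) \ge \frac{\varrho}{2}\|x-y\|^2$.

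Part (ii) is then immediate. Nonnegativity $\B_\psi(x,y) \ge 0$ is inherited from (i) since $\frac{\varrho}{2}\|x-y\|^2 \ge 0$. For the equality case, if $x = y$ the defining identity gives $\B_\psi(x,x) = 0$ directly, while conversely $\B_\psi(x,y) = 0$ combined with (i) forces $0 \ge \frac{\varrho}{2}\|x-y\|^2 \ge 0$, whence $\|x-y\| = 0$ and $x = y$. For part (iii), I would apply Lemma~\ref{lem:delem} to the $L_\psi$-smooth function $\psi$, with the two points ordered so that $y$ serves as the base point, obtaining $\psi(x) \le \psi(y) + \langle \nabla \psi(y), x-y\rangle + \frac{L_\psi}{2}\|x-y\|^2$ for all $x, y \in \mathbb{S}$; rearranging this is precisely $\B_\psi(x,y) \le \frac{L_\psi}{2}\|x-y\|^2$.

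The only genuinely delicate step is the subdifferential computation in (i): one must justify that $\nabla \psi(y)$ belongs to $\partial \theta(y)$ and that the strong-convexity subgradient inequality is legitimately applied. The restriction $y \in \mathbb{S} \cap \dom{\partial \psi}$ is exactly what guarantees differentiability of $\psi$ at $y$, so that the sum rule with the smooth summand applies cleanly and the gradient term in the Bregman distance is well defined. Everything else is routine rearrangement of the defining identity together with the cited lemmas.
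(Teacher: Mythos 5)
Your proof is correct. Note that the paper itself states this lemma without proof, citing Bregman (1967) and Chapter 9 of Beck's \emph{First-Order Methods in Optimization}; your argument is precisely the standard one from that reference: the subgradient inequality for the $\varrho$-strongly convex function $\psi+\mathcal{I}_{\mathbb{S}}$ (with the easy inclusion $\nabla\psi(y)+N_{\mathbb{S}}(y)\subseteq\partial(\psi+\mathcal{I}_{\mathbb{S}})(y)$, which needs no constraint qualification) for items (i)--(ii), and the descent lemma (Lemma~\ref{lem:delem}) with base point $y$ for item (iii). The only cosmetic caveat is that applying Lemma~\ref{lem:delem} in (iii) implicitly uses differentiability of $\psi$ on all of $\mathbb{S}$, an imprecision already present in the lemma statement itself rather than a gap in your argument.
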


\section{Algorithm and convergence analysis}\label{Sec:Alg}
In this section, we aim to introduce a new algorithm for \eqref{priminal problem} and  establish the convergence result for the proposed algorithm.
\subsection{Algorithm: iBPDCA}
The inertial technique has been widely used for algorithmic acceleration. Therefore, we first follow the inertial idea to generate an intermediate point via 
\begin{equation}\label{extrastep}
\wxx^k = x^k+\alpha_k(x^k-x^{k-1}),
\end{equation}
where $\alpha_k\in(0,1]$ is an extrapolation parameter. Then, we employ the Fenchel-Young inequality \eqref{eq:FY} to iteratively construct a majorized surrogate of $-g(x)$ at $\wxx^k$:
\begin{align*}
g^*(\xi)-\langle \xi,\wxx^k\rangle \geq -g(\wxx^k), \quad \forall \xi\in\mathbb{R}^n.
\end{align*}
In this situation, we can construct a $\xi$-subproblem to update $\xi^{k+1}$ uniquely by attaching a proximal term $\frac{\beta}{2}\|\xi-\xi^k\|^2$, i.e.,
\begin{align}\label{eq:subgx}
\xi^{k+1} = \arg\min_{\xi \in \RR^n} \left\{g^*(\xi) - \langle \wxx^k, \xi \rangle +\frac{\beta}{2}\|\xi-\xi^k\|^2\right\}, 
\end{align}
where $\beta>0$ is a regularization parameter. Finally, we invoke the decomposable form of $h(x)=h^+(x)-h^-(x)$, and reformulate \eqref{priminal problem} as
\begin{align*}
\min_{x\in \mathbb{R}^n}\Phi(x) = \left(f(x)+h^+(x)\right)-\left(g(x)+h^-(x)\right).
\end{align*}
Consequently, we follow the DCA spirit and attach a Bregman proximal term to the $x$-subproblem. Specifically, we update $x^{k+1}$ via 
\begin{align}\label{algeq:update_x}
x^{k+1}=& \arg\min_{x \in \RR^n} \left\{f(x)+h^+(x)- \langle x-\wxx^k, u^k\rangle+ \tau\B_{\psi}(x,\wxx^k)\right\},
\end{align}
where $u^k=\xi^{k+1} + \nabla h^-(\wxx^k)$ and $\tau>0$ is also a proximal regularization parameter. Concretely, we summarize the details for solving \eqref{priminal problem} in Algorithm \ref{alg:proposed}.

\begin{algorithm}
	\caption{Inertial Bregman Proximal DC Algorithm (iBPDCA) for solving \eqref{priminal problem}.}\label{alg:proposed}
	\begin{algorithmic}
		\STATE{Choose a starting point $x^0,~{\rm set}~x^{-1}=x^0, \{\alpha_k\}\subseteq [\alpha_{\min},1], 0<\alpha_{\min}<1, \beta>1/2~{\rm and~}\tau>0.$}
		\FOR{$k=0,1,2,\cdots,$}
		\STATE{Calculate $\wxx^k$ via \eqref{extrastep}.}
		\STATE{Compute $\xi^{k+1}$via \eqref{eq:subgx}.}
		\STATE{Update $x^{k+1}$ via \eqref{algeq:update_x}.
		}
		\ENDFOR
	\end{algorithmic}
\end{algorithm}

\begin{remark}
	In the literature, most existing DC-type algorithms usually randomly select a subgradient of $g(x^k)$ when $g$ is not differentiable. Comparatively, we can obtain a unique $\xi^{k+1}$ via \eqref{eq:subgx} as long as $g(x)$ is a convex function. Actually, updating $\xi^{k+1}$ amounts to evaluating $\prox_{\frac{1}{\beta}g^*}\left(\xi^k+\frac{1}{\beta}\widehat{x}^k\right)$, which, with the help of the extended Moreau Decomposition Theorem \ref{the:moreau}, can be easily obtained via
	\begin{align*}
	\prox_{\frac{1}{\beta}g^*}\left(\xi^k+\frac{1}{\beta}\widehat{x}^k\right) =  \xi^k+\frac{1}{\beta}\widehat{x}^k-\frac{1}{\beta}\prox_{{\beta}g}\left(\beta\xi^k+\widehat{x}^k\right).
	\end{align*}
Therefore, we do not care about the explicit form of $g^*(x)$, and our algorithm enjoys a simpler iterative scheme.
\end{remark}

\subsection{Convergence analysis}
In this subsection, we are concerned with the convergence properties of Algorithm \ref{alg:proposed}.
To begin with, we state some standard assumptions on \eqref{priminal problem}, which will be used in convergence analysis.
\begin{assumption}\label{ass:1}
	$f(x)$ is a  proper lower semicontinuous function and $g(x)$ is a continuous and convex function.	
\end{assumption}
\begin{assumption}\label{ass:2}
	$\inf_{x}\Phi(x) :=\inf_{x}\{ f(x)-g(x)+h(x)\} > -\infty.$
\end{assumption}
\begin{assumption}\label{ass:3}
	$h(x)$ is decomposable of the form $h(x):=h^+(x)-h^-(x)$ with Lipschitz continuity modulus $L_h^+$ and $L_h^-$ for $h^+(x)$ and $h^-(x)$, respectively. 
\end{assumption}
In the coming analysis, for notational simplicity, we denote
\begin{align*}
\Theta(x,\xi)= f(x)+g^*(\xi)-\langle \xi,x\rangle+h^+(x)-h^-(x)
\end{align*}
as a surrogate objective function to approximate $\Phi(x)$. From the Fenchel-Young inequality, it is easy to get $\Theta(x,\xi)\geq \Phi(x).$

\begin{lemma}\label{phi-decrease-lemma-conclusion1}
	Suppose that Assumptions~\ref{ass:1},~\ref{ass:2} and \ref{ass:3} hold. Then, there exist two scalars $H$ and $P$ such that 
	\begin{align}\label{phi-decrease-lemma-conclusion}
	\Theta(x^{k+1},\xi^{k+1})\le & \Theta(x^k,\xi^{k})+ H\|\wxx^k-x^k\|^2+ P\|\wxx^{k+1}-x^{k+1}\|^2
	+\left(\frac{1}{2}-\beta\right)\|\xi^{k+1}-\xi^k\|^2,
	\end{align}
	where
	\begin{align}\label{HPeq}
	H=\frac{\tau L_{\psi}+1+2(L_h^-)^2-2\tau\rho}{2}\quad \text{and} \quad
	P=\frac{2(L_h^-)^2+L_h^- -2\tau\rho+1}{{2\alpha_{\min}^2}}.
	\end{align} 
\end{lemma}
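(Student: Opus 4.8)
The plan is to establish the descent inequality \eqref{phi-decrease-lemma-conclusion} by exploiting the two optimality conditions coming from the $\xi$-subproblem \eqref{eq:subgx} and the $x$-subproblem \eqref{algeq:update_x}, and then bounding the error introduced by the extrapolation step \eqref{extrastep}. The key idea is that $\Theta(x,\xi)$ decomposes into a part that is handled by the $\xi$-update and a part handled by the $x$-update, and each update produces a clean inequality via convexity. First I would write out the first-order optimality condition for \eqref{eq:subgx}: since $g^*$ is convex, the map $\xi\mapsto g^*(\xi)-\langle\wxx^k,\xi\rangle+\frac{\beta}{2}\|\xi-\xi^k\|^2$ is strongly convex, so $\wxx^k-\beta(\xi^{k+1}-\xi^k)\in\partial g^*(\xi^{k+1})$. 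Using the subgradient inequality for the convex function $g^*$ at $\xi^{k+1}$ tested against $\xi^k$, together with the elementary identity $\langle a,b\rangle=\frac12\|a\|^2+\frac12\|b\|^2-\frac12\|a-b\|^2$, I would derive a bound of the form
\begin{align*}
g^*(\xi^{k+1})-\langle\wxx^k,\xi^{k+1}\rangle\le g^*(\xi^k)-\langle\wxx^k,\xi^k\rangle+\Bigl(\tfrac12-\beta\Bigr)\|\xi^{k+1}-\xi^k\|^2.
\end{align*}
This explains the appearance of the term $\bigl(\frac12-\beta\bigr)\|\xi^{k+1}-\xi^k\|^2$ and the algorithmic requirement $\beta>1/2$.

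Next I would handle the $x$-subproblem. Because $f+h^+$ together with the Bregman term $\tau\B_\psi(\cdot,\wxx^k)$ is being minimized and $\psi+\mathcal I_{\mathbb S}$ is $\varrho$-strongly convex, the objective in \eqref{algeq:update_x} is strongly convex, so its optimality at $x^{k+1}$ tested against $x^k$ gives an inequality controlling $f(x^{k+1})+h^+(x^{k+1})$ in terms of $f(x^k)+h^+(x^k)$. Here I would use the three-point property of the Bregman distance and part (i) of Lemma~\ref{lem:Bregman} to lower-bound $\B_\psi$ by $\frac\varrho2\|\cdot\|^2$ and part (iii) to upper-bound it by $\frac{L_\psi}2\|\cdot\|^2$; this is the source of the $\tau L_\psi$, $-2\tau\varrho$ terms in \eqref{HPeq}. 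The linear coupling term $-\langle x-\wxx^k,u^k\rangle$ with $u^k=\xi^{k+1}+\nabla h^-(\wxx^k)$ is designed precisely so that the $\xi^{k+1}$ piece recombines with the $g^*$ estimate above to reproduce $-\langle\xi^{k+1},x^{k+1}\rangle$ inside $\Theta(x^{k+1},\xi^{k+1})$, while the $\nabla h^-(\wxx^k)$ piece must be reconciled with the true value $h^-(x^{k+1})$.

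The reconciliation of the $h^-$ terms is where Lemma~\ref{lem:delem} (the descent lemma) enters: since $h^-$ has Lipschitz gradient with modulus $L_h^-$, I can bound $-h^-(x^{k+1})$ above using its linearization at $\wxx^k$ plus a $\frac{L_h^-}2\|x^{k+1}-\wxx^k\|^2$ term, and symmetrically control the mismatch between $\nabla h^-(\wxx^k)$ and the values of $h^-$ at $x^k$ and $x^{k+1}$. The extrapolation point $\wxx^k=x^k+\alpha_k(x^k-x^{k-1})$ means that all the squared-distance error terms are naturally expressed in terms of $\|\wxx^k-x^k\|^2$ and $\|\wxx^{k+1}-x^{k+1}\|^2$; the factor $\frac1{\alpha_{\min}^2}$ in $P$ arises from converting a bound involving $\|x^{k+1}-\wxx^k\|$ or a displacement at step $k+1$ into the standardized quantity $\|\wxx^{k+1}-x^{k+1}\|^2$ using $\alpha_k\ge\alpha_{\min}$.

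The main obstacle I anticipate is the careful bookkeeping needed to collect every quadratic remainder into exactly the two prescribed coefficients $H$ and $P$ in \eqref{HPeq}, since several distinct sources — the strong convexity constant $\varrho$, the Bregman Lipschitz modulus $L_\psi$, and the repeated use of the descent lemma for $h^-$ (which contributes both an $(L_h^-)^2$ and an $L_h^-$ type term, presumably via a Young's inequality $\langle a,b\rangle\le\frac12\|a\|^2+\frac12\|b\|^2$ to split the cross term $\langle\nabla h^-(\wxx^k)-\nabla h^-(x^{k}),\,x^{k+1}-\wxx^k\rangle$) — all feed into the same squared-norm terms and must be grouped consistently. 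Getting the split between the index-$k$ bucket ($H$) and the index-$(k+1)$ bucket ($P$) right, and verifying that the $\frac1{\alpha_{\min}^2}$ scaling lands only on $P$, is the delicate accounting step; the individual inequalities are each routine.
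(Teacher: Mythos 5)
Your outline follows the same route as the paper (optimality of the $\xi$-subproblem plus the subgradient inequality for $g^*$, a comparison inequality from the $x$-subproblem, two-sided Bregman bounds via $\rho$ and $L_\psi$, the descent lemma and Young's inequality for $h^-$, and the identity $\|x^{k+1}-x^k\|^2=\alpha_{k+1}^{-2}\|\wxx^{k+1}-x^{k+1}\|^2$), but there is a concrete error in your first step that makes the stated constants unreachable. From $\wxx^k-\beta(\xi^{k+1}-\xi^k)\in\partial g^*(\xi^{k+1})$, the subgradient inequality tested at $\xi^k$ gives exactly
\begin{align*}
g^*(\xi^{k+1})-\langle\wxx^k,\xi^{k+1}\rangle\le g^*(\xi^k)-\langle\wxx^k,\xi^k\rangle-\beta\|\xi^{k+1}-\xi^k\|^2,
\end{align*}
with coefficient $-\beta$, not $\tfrac12-\beta$; no polarization identity produces a $\tfrac12$ at this stage. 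The $\tfrac12$ in the lemma has a different origin, which your plan never identifies: your $\xi$-estimate is anchored at $\wxx^k$, while $\Theta(x^k,\xi^k)$ contains $-\langle x^k,\xi^k\rangle$, and after adding the $x$-inequality (whose coupling term supplies $-\langle x^k-\wxx^k,\xi^{k+1}\rangle$ on the right-hand side) the leftover is precisely the cross term $\langle x^k-\wxx^k,\xi^k-\xi^{k+1}\rangle$. Cauchy--Schwarz applied to this term is what contributes both the $+\tfrac12\|\xi^{k+1}-\xi^k\|^2$ (whence $\tfrac12-\beta$) and the $+\tfrac12\|\wxx^k-x^k\|^2$ that accounts for the ``$+1$'' in the numerator of $H$ in \eqref{HPeq}. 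As written, your argument spends this $\tfrac12$ of slack twice: your weakened $\xi$-inequality already carries $\tfrac12-\beta$, and the unavoidable cross term then adds another $\tfrac12$, so executing your outline yields the coefficient $1-\beta$ on $\|\xi^{k+1}-\xi^k\|^2$. That is strictly weaker than \eqref{phi-decrease-lemma-conclusion}, and it matters downstream: the merit function in \eqref{hatTheta} takes $\eta=\frac{2\beta-1}{2(1+\epsilon)}$ and needs this coefficient to be negative for every $\beta>\tfrac12$, which $1-\beta$ does not provide.

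A secondary flaw: you justify the $x$-update inequality by asserting that the subproblem objective in \eqref{algeq:update_x} is strongly convex. Under Assumption \ref{ass:1}, $f$ is only proper lower semicontinuous, and $h^+$ may be nonconvex (only its gradient is Lipschitz), so strong convexity of $f+h^+-\langle\cdot-\wxx^k,u^k\rangle+\tau\B_{\psi}(\cdot,\wxx^k)$ is not available. No convexity is needed here: since $x^{k+1}$ is a global minimizer of the subproblem, its objective value at $x^{k+1}$ is at most its value at the competitor $x^k$, and that value comparison is exactly the inequality the argument requires (this is what the paper does). With these two repairs --- the tight $-\beta$ bound followed by explicit treatment of $\langle x^k-\wxx^k,\xi^k-\xi^{k+1}\rangle$, and minimality in place of strong convexity --- your bookkeeping plan does reproduce \eqref{phi-decrease-lemma-conclusion} with the constants \eqref{HPeq}.
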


\begin{proof}
	From the first-order optimality condition of \eqref{eq:subgx}, we easily obtain 
	\begin{align*}
	\wxx^k-\beta(\xii^{k+1}-\xii^k)\in\partial g^*(\xii^{k+1})	.
	\end{align*}
	Combining the convexity of $g^*$, we get 
	\begin{align*}
	g^*(\xi^{k+1})\leq g^*(\xii^k)+\langle \xii^{k+1}-\xii^k,\wxx^k-\beta(\xii^{k+1}-\xii^k) \rangle.
	\end{align*}
	According to the update scheme \eqref{algeq:update_x}, it yields
	\begin{align*}
	&f(x^{k+1})+h^+(x^{k+1})-\langle x^{k+1}-\wxx^k,u^k \rangle+\tau\B_{\psi}(x^{k+1},\wxx^k) \nn\\
	&\leq 
	f(x^{k})+h^+(x^{k})-\langle x^{k}-\wxx^k,u^k \rangle+\tau\B_{\psi}(x^{k},\wxx^k).
	\end{align*}
	Adding the above two inequalities, and then reorganizing the resulting inequality, we have
	\begin{align*}
	&f(x^{k+1})+g^*(\xi^{k+1})+h^+(x^{k+1})-h^+(x^{k})+\langle \wxx^k,\xii^k-\xii^{k+1}\rangle+\langle x^k- x^{k+1},\xii^{k+1}+\nabla h^-(\wxx^{k}) \rangle\nn\\
	& \leq f(x^{k})+g^*(\xii^k)-\beta\|\xii^k-\xii^{k+1}\|^2+\tau\B_{\psi}(x^k,\wxx^k)-\tau\B_{\psi}(x^{k+1},\wxx^k).
	\end{align*}
	Subtracting $\langle x^{k},\xii^k\rangle$ both sides simultaneously and arranging each term, we arrive at
	\begin{align}
	&f(x^{k+1})+g^*(\xi^{k+1})-\langle x^{k+1},\xii^{k+1}\rangle+h^+(x^{k+1})-h^+(x^{k})\nn\\
	&\leq f(x^{k})+g^*(\xii^k)-\langle x^{k},\xii^{k}\rangle-\langle x^k-x^{k+1},\nabla h^-(\wxx^{k}) \rangle-\beta\|\xii^k-\xii^{k+1}\|^2 \nn\\
	&\quad+\tau\B_{\psi}(x^{k},\wxx^k)-\tau\B_{\psi}(x^{k+1},\wxx^k)+\langle x^k-\wxx^k,\xii^k-\xii^{k+1}\rangle.
	\end{align}	
	Recalling the strong convexity of $\psi$ with strongly convex modulus $\rho$, and the Lipschitz gradient continuity property with constant $L_{\psi}$, we have
	\begin{align*}
	&\tau\B_{\psi}(x^{k},\wxx^k)-\tau\B_{\psi}(x^{k+1},\wxx^k)\le \frac{\tau L_{\psi}}{2}\|x^k-\wxx^k\|^2-\frac{\tau \rho}{2}\|x^{k+1}-\wxx^k\|^2.
	\end{align*}
	From Cauchy-Schwarz inequality, there holds
	\begin{align*}
	\langle x^k-\wxx^k,\xii^k-\xii^{k+1}\rangle\leq \frac{1}{2}\|\xii^{k+1}-\xii^k\|^2+\frac{1}{2}\|\wxx^k-x^k\|^2.
	\end{align*}	
	Subsequently, we get 
	\begin{align}\label{au-decrease-2}
	&f(x^{k+1})+g^*(\xi^{k+1})-\langle x^{k+1},\xii^{k+1}\rangle+h^+(x^{k+1})-h^-(x^{k+1})\nn\\
	&-(h^+(x^{k})-h^-(x^{k}))\nn\\
	&\leq f(x^{k})+g^*(\xii^k)-\langle x^{k},\xii^{k}\rangle-\langle x^k-x^{k+1},\nabla h^-(\wxx^{k})\rangle\nn\\
	&\quad-h^-(x^{k+1})+h^-(x^{k})
	-\beta\|\xii^k-\xii^{k+1}\|^2\nn\\
	&
	\quad+\frac{\tau L_{\psi}}{2}\|x^k-\wxx^k\|^2-\frac{\tau \rho}{2}\|x^{k+1}-\wxx^k\|^2+\frac{1}{2}\|\wxx^k-x^k\|^2+\frac{1}{2}\|\xii^{k+1}-\xii^k\|^2.
	\end{align}	
	Based on the Assumption~\ref{ass:1}, it follows from Lemma \ref{lem:delem} that 
	\begin{align*}
	&-h^-(x^{k+1})+h^-(x^{k})
	\leq \langle -\nabla h^-(x^{k+1}),x^{k+1}-x^{k}\rangle + \frac{L_h^-}{2}\|x^{k+1}-x^k\|^2,
	\end{align*}
which further implies that
	\begin{align}\label{h-inequality}
	&-\langle x^k-x^{k+1},\nabla h^-(\wxx^{k})\rangle-h^-(x^{k+1})+h^-(x^{k})\nn\\
	&\leq \langle x^k-x^{k+1},\nabla h^-(x^{k+1})-\nabla h^-(\wxx^{k})\rangle + \frac{L_h^-}{2}\|x^{k+1}-x^k\|^2\nn\\
	&\leq \frac{(L_h^-)^2}{2}\|\wxx^k-x^{k+1}\|^2+\frac{L_h^-+1}{2}\|x^{k+1}-x^k\|^2,
	\end{align}
	where the last inequality holds from the Cauchy-Schwarz inequality. Based on the fact $\|x^{k+1}-x^k\|^2=\frac{1}{\alpha_{k+1}^2}\|\wxx^{k+1}-x^{k+1}\|^2$, by substituting \eqref{h-inequality} into \eqref{au-decrease-2}, we have
	\begin{align}
	&f(x^{k+1})+g^*(\xi^{k+1})-\langle x^{k+1},\xii^{k+1}\rangle+h^+(x^{k+1})-h^-(x^{k+1})\nn\\
	&\leq  f(x^{k})+g^*(\xii^k)-\langle x^{k},\xii^{k}\rangle+h^+(x^{k})-h^-(x^{k})+\left(\frac{1}{2}-\beta\right)\|\xii^k-\xii^{k+1}\|^2\nn\\
	&\quad+\frac{
		\tau    	L_{\psi}+1}{2}\|x^k-\wxx^k\|^2+\frac{(L_h^-)^2-\tau \rho}{2}\|\wxx^k-x^{k+1}\|^2+\frac{L_h^-+1}{2\alpha_{k+1}^2}\|\wxx^{k+1}-x^{k+1}\|^2.\nn
	\end{align}
	Since 
	$$\|x^{k+1}-\wxx^k\|^2= \left\|\frac{1}{\alpha_k}(\hat{x}^{k+1}-x^{k+1})+(x^k-\hat{x}^k)\right\|^2\leq \frac{2}{\alpha_{k+1}^2}\|\wxx^{k+1}-x^{k+1}\|^2+2\|x^k-\wxx^k\|^2,$$
	and for any $k\in \mathbb{N}$, $0<\alpha_{\min}\leq\alpha_k$,
	it yields
	\begin{align*}
	&f(x^{k+1})+g^*(\xi^{k+1})-\langle x^{k+1},\xii^{k+1}\rangle+h^+(x^{k+1})-h^-(x^{k+1})\nn\\
	&\leq  f(x^{k})+g^*(\xii^k)-\langle x^{k},\xii^{k}\rangle+h^+(x^{k})-h^-(x^{k})+(\frac{1}{2}-\beta)\|\xii^k-\xii^{k+1}\|^2\nn\\
	&\quad+\frac{\tau     L_{\psi}+1+2(L_h^-)^2-2\tau\rho}{2}\|x^k-\wxx^k\|^2+\frac{2(L_h^-)^2+L_h^--2\tau  \rho+1}{{2\alpha_{\min}^2}}\|\wxx^{k+1}-x^{k+1}\|^2, \nn
	\end{align*}
{which, together with the notations $H$ and $P$ in \eqref{HPeq} implies the assertion of this lemma.}
\end{proof}

 Now, we construct a merit function $\widehat{\Theta}: \mathbb{R}^n\times\mathbb{R}^n\times\mathbb{R}^n\times\mathbb{R}^n\rightarrow\bar{\mathbb{R}}$:
	\begin{align}\label{hatTheta-def}
			\widehat{\Theta}(x,\xi,\wxx,\zeta)=&~{\Theta}(x,\xi)+\delta\|x-\wxx\|^2+ \eta\|\xi-\zeta\|^2,
	\end{align}	
	For all $k\geq 1$, letting  $\left\{(x^k,\xi^k,\wxx^k,\xi^{k-1})\right\}_{k\in \mathbb{N}}$ be a sequence generated by Algorithm \ref{alg:proposed}, and setting $x=x^k, \xi=\xi^k, \wxx=\wxx^k$ and $\zeta=\xi^{k-1}$ in function (\ref{hatTheta-def}), we immediately define a new sequence as follows:
	\begin{align}\label{hatTheta}
	\left\{\widehat{\Theta}(x^k,\xi^k,\widehat{x}^k,\xi^{k-1})\right\}_{k\in \mathbb{N}}=&~\left\{{\Theta}(x^k,\xi^k)+\delta\|x^k-\wxx^k\|^2+ \eta\|\xii^{k}-\xii^{k-1}\|^2\right\}_{k\in \mathbb{N}},
	\end{align}
	where 	\begin{align*}
	\delta=\frac{\tau  L_{\psi}-L_h^-}{2[(1-\epsilon)+(1+\epsilon){\alpha_{\min}^2}]},\;\;\tau  =\frac{2(L_h^-)^2+{L_h^-}+1+{2(1+\epsilon)\delta}}{2\rho},\;\;
	\eta=\frac{2\beta-1}{2(1+\epsilon)}.
	\end{align*}
Then, let $\beta>\frac{1}{2},\;L_h^-<\tau  L_{\psi}$, and $\epsilon>0$ be an arbitrary real number satisfying $1-\epsilon>0$. We below show that  $\left\{\widehat{\Theta}(x^k,\xi^k,\widehat{x}^k,\xi^{k-1})\right\}_{k\in \mathbb{N}}$ defined by \eqref{hatTheta} satisfies following decreasing property. 
In what follows, we denote $\omega:=(x,\xi)$ and $\wo=:(x,\xi,\widehat{x},\zeta)$ for simplicity.

\begin{lemma}\label{lemma-convergence-z}
	Suppose Assumptions ~\ref{ass:1},~\ref{ass:2} and \ref{ass:3} hold, and the parameters are set as in \eqref{hatTheta}. Let $\{\wo^k\}_{k\in \mathbb{N}}:=\left\{(x^k,\xi^k,\widehat{x}^k,\xi^{k-1})\right\}_{k\in \mathbb{N}}$ be a sequence generated by Algorithm \ref{alg:proposed}. Assume that $\{\wo^k\}_{k\in \mathbb{N}}$ is bounded. Then, the following statements hold.
	\begin{itemize}
		\item[\rm (i)] $\{\widehat{\Theta}(\wo^k)\}_{k\in \mathbb{N}}$ is monotonically nonincreasing and there exists a $\sigma=\epsilon\min\{\delta,\eta\},\; $ such that 
		\begin{align}
		\label{bounded-below}
		\sigma\left(\|x^k-\wxx^k\|^2+\|x^{k+1}-\wxx^{k+1}\|^2+\|\xii^{k}-\xii^{k+1}\|^2\right) \leq \widehat{\Theta}(\wo^{k})-\widehat{\Theta}(\wo^{k+1}).
		\end{align}
		\item[\rm (ii)]
		\begin{align*}
		\sum_{k=0}^{\infty} \|x^{k+1}-\wxx^{k+1}\|^2<\infty \quad \text{and}\quad
		\sum_{k=0}^{\infty}\|\xii^{k}-\xii^{k+1}\|^2<\infty ,
		\end{align*}
		In particular, $\lim_{k\rightarrow\infty}\|\xii^{k+1}-\xii^k\|=0$, 
		and $\lim_{k\rightarrow\infty}\|x^{k+1}-\wxx^{k+1}\|=0$, which implies $\lim_{k\rightarrow\infty}\|x^k-x^{k+1}\|=0.$ 
	\end{itemize}
\end{lemma}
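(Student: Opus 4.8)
The plan is to turn the one-step estimate of Lemma~\ref{phi-decrease-lemma-conclusion1} into a genuine descent property for the merit function $\widehat{\Theta}$ and then sum. First I would substitute the definition \eqref{hatTheta}, in the form $\Theta(x^k,\xii^k)=\widehat{\Theta}(\wo^k)-\delta\|x^k-\wxx^k\|^2-\eta\|\xii^k-\xii^{k-1}\|^2$, into the right-hand side of \eqref{phi-decrease-lemma-conclusion}, and likewise add $\delta\|x^{k+1}-\wxx^{k+1}\|^2+\eta\|\xii^{k+1}-\xii^k\|^2$ to both sides to reconstruct $\widehat{\Theta}(\wo^{k+1})$ on the left. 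Collecting the squared-norm terms, I expect to reach
\begin{align*}
\widehat{\Theta}(\wo^k) - \widehat{\Theta}(\wo^{k+1}) \ge{}& (\delta - H)\|x^k-\wxx^k\|^2 + (-P-\delta)\|x^{k+1}-\wxx^{k+1}\|^2 \\
&+ \left(\beta - \tfrac{1}{2} - \eta\right)\|\xii^{k+1}-\xii^k\|^2 + \eta\|\xii^k-\xii^{k-1}\|^2,
\end{align*}
where the final term is nonnegative and will simply be discarded (it does not telescope, but being $\ge 0$ it only helps).

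The algebraic heart of the proof is to show that, under the parameter choices in \eqref{hatTheta}, each of the three leading coefficients is at least $\sigma=\epsilon\min\{\delta,\eta\}$. The key is to keep the coupled defining relations for $\tau$ and $\delta$ as identities rather than solving for them. From the $\tau$-relation one has $2\tau\rho=2(L_h^-)^2+L_h^-+1+2(1+\epsilon)\delta$; feeding this into \eqref{HPeq} collapses the constants to $P=-(1+\epsilon)\delta/\alpha_{\min}^2$ and, after also using the $\delta$-relation $\tau L_\psi-L_h^-=2\delta[(1-\epsilon)+(1+\epsilon)\alpha_{\min}^2]$, to $H=\delta[(1+\epsilon)\alpha_{\min}^2-2\epsilon]$. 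The three coefficients then reduce to clean forms,
\begin{align*}
\delta - H &= \epsilon\delta + (1+\epsilon)(1-\alpha_{\min}^2)\,\delta, \\
-P-\delta &= \epsilon\delta + \frac{(1+\epsilon)(1-\alpha_{\min}^2)}{\alpha_{\min}^2}\,\delta, \\
\beta - \tfrac{1}{2} - \eta &= \epsilon\eta,
\end{align*}
each of which is a manifestly nonnegative remainder added to $\epsilon\delta$ or $\epsilon\eta$, hence $\ge\sigma$, using $\alpha_{\min}\in(0,1)$, $\delta>0$ (which follows from $\tau L_\psi>L_h^-$ together with $1-\epsilon>0$), and $\eta>0$ (from $\beta>\tfrac12$). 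This establishes \eqref{bounded-below}, and since its right-hand side is nonnegative it also yields the monotone nonincreasing claim of part~(i).

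For part~(ii) I would sum \eqref{bounded-below} over $k=0,\dots,N$; the left side telescopes to $\widehat{\Theta}(\wo^0)-\widehat{\Theta}(\wo^{N+1})$. The merit function is bounded below because $\Theta(x,\xii)\ge\Phi(x)\ge\inf_x\Phi(x)>-\infty$ by Assumption~\ref{ass:2} and the two quadratic add-ons are nonnegative; combined with monotonicity this makes $\{\widehat{\Theta}(\wo^k)\}$ convergent. Letting $N\to\infty$ gives $\sum_k\|x^{k+1}-\wxx^{k+1}\|^2<\infty$ and $\sum_k\|\xii^{k+1}-\xii^k\|^2<\infty$, whence both summands tend to $0$. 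Finally, from \eqref{extrastep} one has $\wxx^{k+1}-x^{k+1}=\alpha_{k+1}(x^{k+1}-x^k)$ with $\alpha_{k+1}\ge\alpha_{\min}>0$, so $\|x^{k+1}-x^k\|=\alpha_{k+1}^{-1}\|\wxx^{k+1}-x^{k+1}\|\le\alpha_{\min}^{-1}\|\wxx^{k+1}-x^{k+1}\|\to0$.

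The hard part will be the algebraic verification in the second step: the parameters $\delta$, $\tau$, $\eta$ are evidently reverse-engineered so that the three coefficients split exactly as $\sigma$ plus a nonnegative surplus, and one must substitute the interlocking identities in the right order (first eliminate $2\tau\rho$ in $P$ and $H$, then use the $\delta$-relation) rather than attempting to solve the coupled system explicitly. Everything else is routine bookkeeping and a standard summable-sequence argument.
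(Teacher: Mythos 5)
Your proposal is correct and follows essentially the same route as the paper: part (i) is exactly the paper's "immediate from Lemma \ref{phi-decrease-lemma-conclusion1} and the definition \eqref{hatTheta}" step, for which you supply the omitted algebra (your identities $P=-(1+\epsilon)\delta/\alpha_{\min}^2$, $H=\delta[(1+\epsilon)\alpha_{\min}^2-2\epsilon]$, and the three coefficient splittings all check out), and part (ii) is the paper's telescoping/summability argument verbatim, including the use of $\Theta\ge\Phi\ge\inf\Phi>-\infty$ and the relation $\wxx^{k+1}-x^{k+1}=\alpha_{k+1}(x^{k+1}-x^k)$ with $\alpha_{k+1}\ge\alpha_{\min}>0$.
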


\begin{proof}
	\begin{itemize}
		\item[\rm (i)]
		The conclusion can be obtained immediately from Lemma \ref{phi-decrease-lemma-conclusion1} and the definition of \eqref{hatTheta} with the specific parameter settings.
		\item[\rm (ii)]	
		From Assumption \ref{ass:2} and the definition of {$\Theta$}, we know that the function {$\Theta$} is bounded below. Hence the function $\widehat{\Theta}$ is also bounded below. Consequently, it follows from (i) that $\{\widehat{\Theta}(\widehat{\omega}^k)\}$ is  monotonically nonincreasing. Therefore, $\{\widehat{\Theta}(\widehat{\omega}^k)\}$ is convergent.
		Let $N$ be a positive integer. Summing up \eqref{bounded-below} from $k=0$ to $N-1$ leads to
		\begin{align*}
		\sigma\sum_{k=0}^{N-1} \left(\|x^k-\wxx^k\|^2+\|x^{k+1}-\wxx^{k+1}\|^2+\|\xii^{k}-\xii^{k+1}\|^2\right)\leq
		{\widehat{\Theta}} (\wo^{0})-{\widehat{\Theta}} (\wo^{N}).
		\end{align*}
		From the definition of $\sigma$ and taking the limit as $N\rightarrow\infty$, immediately yields
		\begin{align*}
		\sum_{k=0}^{\infty} \left(\|x^k-\wxx^k\|^2+\|x^{k+1}-\wxx^{k+1}\|^2+\|\xii^{k}-\xii^{k+1}\|^2\right)< \infty.
		\end{align*}
		Hence $\sum_{k=0}^{\infty} \|x^{k+1}-\wxx^{k+1}\|^2<\infty.$ We can further obtain  $\lim_{k\rightarrow\infty}\|x^{k+1}-\wxx^{k+1}\|^2=0$. Based on the fact $\|x^{k+1}-x^k\|^2=\frac{1}{\alpha_{k+1}^2}\|\wxx^{k+1}-x^{k+1}\|^2$, it immediately conclude that $\lim_{k\rightarrow\infty}\|x^{k+1}-x^{k}\|^2=0$. Likewise, we have $\sum_{k=0}^{\infty}\|\xii^{k}-\xii^{k+1}\|^2<\infty $, which means $\lim_{k\rightarrow\infty}\|\xii^{k}-\xii^{k+1}\|^2=0$.
	\end{itemize}
\end{proof}

Next, we will show that the subgradient of auxiliary function $\widehat{\Theta}$ is controlled by iteration gap.
\begin{lemma}\label{lemma-subgradient}
	Suppose Assumptions~\ref{ass:1},~\ref{ass:2} and \ref{ass:3} hold, and the parameters are set as in \eqref{hatTheta}. Let $\{\wo^k\}_{k\in \mathbb{N}}$ be a sequence generated by Algorithm \ref{alg:proposed}. Assume that $\{\wo^k\}_{k\in \mathbb{N}}$ is bounded. We define $\upsilon^{k+1}\in \partial {\widehat{\Theta}}(\wo^{k+1})$, where
	\begin{align*}
	\upsilon^{k+1}=(\upsilon^{k+1}_{x},\upsilon^{k+1}_{\xii},\upsilon^{k+1}_{\wxx},\upsilon^{k+1}_{\zeta}).
	\end{align*}
	It holds
	\begin{equation}\label{subgradinent-S}
	\left\{\begin{aligned}
	\upsilon^{k+1}_{x}=&~\nabla h^-(\wxx^k)-\nabla h^-(x^{k+1})+2\delta{(x^{k+1}-\wxx^{k+1})}-\tau  (\nabla\psi(x^{k+1})-\nabla\psi(\wxx^k)),\\
	\upsilon^{k+1}_{\xi}=&~\wxx^k-x^k+\frac{1}{\alpha_{k+1}}(x^{k+1}-\wxx^{k+1})+(2\eta-\beta)(\xi^{k+1}-\xi^k),\\
	\upsilon^{k+1}_{\wxx}=&-2\delta(x^{k+1}-\wxx^{k+1}),\\
	\upsilon^{k+1}_{\zeta}=&~2\eta(\xi^{k}-\xi^{k+1}).
	\end{aligned}\right.
	\end{equation}
Then, there exists $\theta >0$ such that
	\begin{equation}\label{dist-gap-inequality}
	{\dist(0,\partial \widehat{\Theta}(\wo^{k+1}))}\leq\ \theta  \big(\|x^{k}-\wxx^{k}\|+\|x^{k+1}-\wxx^{k+1}\|+\|\xii^{k+1}-\xii^k\| \big).
	\end{equation}
\end{lemma}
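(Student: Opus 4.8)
The plan is to exhibit one explicit element $\upsilon^{k+1}$ of $\partial\widehat{\Theta}(\wo^{k+1})$ computed blockwise, and then bound its norm. Since $\widehat{\Theta}$ splits as the separable nonsmooth convex pair $f(x)+g^*(\xi)$ plus a jointly $C^1$ remainder (the bilinear term $-\langle\xi,x\rangle$, the smooth $h^\pm$, and the two quadratic penalties), the subdifferential sum rule applies and its limiting subdifferential at $\wo^{k+1}=(x^{k+1},\xi^{k+1},\wxx^{k+1},\xi^k)$ is the Cartesian product of the four block subdifferentials. First I would record these: the $x$-block is $\partial f(x^{k+1})-\xi^{k+1}+\nabla h^+(x^{k+1})-\nabla h^-(x^{k+1})+2\delta(x^{k+1}-\wxx^{k+1})$; the $\xi$-block is $\partial g^*(\xi^{k+1})-x^{k+1}+2\eta(\xi^{k+1}-\xi^k)$; and the $\wxx$- and $\zeta$-blocks are the exact gradients $-2\delta(x^{k+1}-\wxx^{k+1})$ and $2\eta(\xi^k-\xi^{k+1})$, which immediately give $\upsilon^{k+1}_{\wxx}$ and $\upsilon^{k+1}_\zeta$.

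Next, to pin down $\upsilon^{k+1}_x$ and $\upsilon^{k+1}_\xi$ I would invoke the first-order optimality conditions of the two subproblems. From \eqref{eq:subgx} one has $\wxx^k-\beta(\xi^{k+1}-\xi^k)\in\partial g^*(\xi^{k+1})$, already used in Lemma \ref{phi-decrease-lemma-conclusion1}; substituting this element into the $\xi$-block and rewriting $\wxx^k-x^{k+1}$ through the extrapolation identity $x^{k+1}-x^k=\frac{1}{\alpha_{k+1}}(\wxx^{k+1}-x^{k+1})$ yields exactly the claimed $\upsilon^{k+1}_\xi$. From \eqref{algeq:update_x}, using $\nabla_x\B_{\psi}(x,\wxx^k)=\nabla\psi(x)-\nabla\psi(\wxx^k)$ and $u^k=\xi^{k+1}+\nabla h^-(\wxx^k)$, the optimality condition selects the element $\xi^{k+1}+\nabla h^-(\wxx^k)-\nabla h^+(x^{k+1})-\tau(\nabla\psi(x^{k+1})-\nabla\psi(\wxx^k))\in\partial f(x^{k+1})$; feeding this into the $x$-block cancels the $\xi^{k+1}$ and $\nabla h^+(x^{k+1})$ terms and leaves precisely the stated $\upsilon^{k+1}_x$. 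This verifies \eqref{subgradinent-S} and certifies $\upsilon^{k+1}\in\partial\widehat{\Theta}(\wo^{k+1})$.

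For \eqref{dist-gap-inequality}, since $\upsilon^{k+1}$ is one admissible element I would use $\dist(0,\partial\widehat{\Theta}(\wo^{k+1}))\le\|\upsilon^{k+1}\|\le\|\upsilon^{k+1}_x\|+\|\upsilon^{k+1}_\xi\|+\|\upsilon^{k+1}_{\wxx}\|+\|\upsilon^{k+1}_\zeta\|$ and bound each block by the triangle inequality. The $\wxx$- and $\zeta$-blocks are already $2\delta\|x^{k+1}-\wxx^{k+1}\|$ and $2\eta\|\xi^{k+1}-\xi^k\|$. For the $x$-block I would invoke the Lipschitz continuity of $\nabla h^-$ (modulus $L_h^-$) and of $\nabla\psi$ (modulus $L_\psi$) to get $\|\upsilon^{k+1}_x\|\le(L_h^-+\tau L_\psi)\|\wxx^k-x^{k+1}\|+2\delta\|x^{k+1}-\wxx^{k+1}\|$, and then control $\|\wxx^k-x^{k+1}\|\le\|x^k-\wxx^k\|+\frac{1}{\alpha_{\min}}\|\wxx^{k+1}-x^{k+1}\|$ via the same extrapolation identity together with $\alpha_{k+1}\ge\alpha_{\min}$; the $\xi$-block is bounded analogously, contributing $\|x^k-\wxx^k\|$, a term $\frac{1}{\alpha_{\min}}\|x^{k+1}-\wxx^{k+1}\|$, and $|2\eta-\beta|\,\|\xi^{k+1}-\xi^k\|$. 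Collecting terms, every summand is a nonnegative linear combination of the three quantities $\|x^k-\wxx^k\|$, $\|x^{k+1}-\wxx^{k+1}\|$ and $\|\xi^{k+1}-\xi^k\|$, so taking $\theta$ to be the largest of the resulting coefficients delivers \eqref{dist-gap-inequality}.

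The main obstacle here is not analytic difficulty but careful bookkeeping. The two delicate points are: correctly attributing the coupling term $-\langle\xi,x\rangle$, which contributes $-\xi$ to the $x$-block and $-x$ to the $\xi$-block; and the one genuinely non-obvious manipulation, namely rewriting $\wxx^k-x^{k+1}$ as $(\wxx^k-x^k)+\frac{1}{\alpha_{k+1}}(x^{k+1}-\wxx^{k+1})$, so that every residual is expressed through the three target gap quantities and no stray $\|x^{k+1}-\wxx^k\|$ survives in the final estimate.
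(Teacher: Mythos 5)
Your proposal is correct and follows essentially the same route as the paper's proof: the same blockwise computation of $\partial\widehat{\Theta}$, the same substitution of the first-order optimality conditions of \eqref{eq:subgx} and \eqref{algeq:update_x} to produce the element \eqref{subgradinent-S}, and the same Lipschitz/triangle-inequality bounds (via the extrapolation identity and $\alpha_{k+1}\geq\alpha_{\min}$) to obtain \eqref{dist-gap-inequality}. Your writing $|2\eta-\beta|$ rather than $(2\eta-\beta)$ in the $\xi$-block bound is a small technical improvement over the paper, but otherwise the arguments coincide.
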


\begin{proof}
	It is clear that
	\begin{equation}
	\left\{\begin{aligned}
	\partial_x {\widehat{\Theta}(\widehat{\omega}^{k+1})}=&~\partial f(x^{k+1})-\xi^{k+1}+\nabla h^+(x^{k+1})-\nabla h^-(x^{k+1})+2\delta{(x^{k+1}-\wxx^{k+1})},\nn\\
	\partial_{\xii}{\widehat{\Theta}(\widehat{\omega}^{k+1})}=&~\partial g^*(\xii^{k+1})-x^{k+1}+2\eta(\xii^{k+1}-\xii^{k}),\\
	\partial_{\wxx} {\widehat{\Theta}(\widehat{\omega}^{k+1})}=&-2\delta(x^{k+1}-\wxx^{k+1}),\\
	\partial_{\zeta} {\widehat{\Theta}(\widehat{\omega}^{k+1})}=&~2\eta(\xi^{k}-\xi^{k+1}).
	\end{aligned}\right.\nonumber
	\end{equation}
	From the first order optimality conditions of \eqref{eq:subgx}-\eqref{algeq:update_x}, we have
	\begin{align*}
	\wxx^k-\beta(\xi^{k+1}-\xi^k)\in\partial g^*(\xii^{k+1}),
	\end{align*}
	and
	\begin{align*}
	&\nabla h^-(\wxx^{k})-\nabla h^+(x^{k+1})-\tau  (\nabla\psi(x^{k+1})-\nabla\psi(\wxx^{k}))+\xii^{k+1}\in\partial f(x^{k+1}).
	\end{align*}
	Then, we can immediately get the formula \eqref{subgradinent-S}. On the other hand, from the Lipschitz gradient continuity property of {$h^{-}$} and $\psi$, we further obtain
	\begin{align*}
	\|\upsilon^{k+1}_{x}\|\leq \left({L_h^-}+\tau  L_{\psi}\right)\|x^k-{\wxx^k}\|+\left(\frac{L_h^-}{\alpha_{k+1}}+2\delta+\frac{\tau  L_{\psi}}{\alpha_{k+1}}\right)\|x^{k+1}-\wxx^{k+1}\|
	\end{align*}
	and it is evident that 
	\begin{align*}
	&\|\upsilon^{k+1}_{\xi}\|\leq \|\wxx^k-x^k\|+\frac{1}{\alpha_k}\|x^{k+1}-\wxx^{k+1}\|+(2\eta-\beta)\|\xi^{k+1}-\xi^k\|,\\
	&\|\upsilon^{k+1}_{\wxx}\|\leq 2\delta\|x^{k+1}-\wxx^{k+1}\|\quad{\rm and}\quad\|\upsilon_{\zeta}^{k+1}\|\leq 2\eta\|\xi^{k+1}-\xi^k\|.	
	\end{align*}
	Thus, there exists $\theta>0$ such that
	\begin{align*}
	\dist(0,\partial \widehat{\Theta}(\wo^{k+1}))\leq \|\upsilon^{k+1}\|=&\sqrt{\|\upsilon^{k+1}_{x}\|^2+\|\upsilon^{k+1}_{\xi}\|^2+\|\upsilon^{k+1}_{\wxx}\|^2+\|\upsilon^{k+1}_{\zeta}\|^2}\nn\\
	\leq & \|\upsilon^{k+1}_{x}\|+\|\upsilon^{k+1}_{\xi}\|+\|\upsilon^{k+1}_{\wxx}\|+\|\upsilon^{k+1}_{\zeta}\|\\
	\leq & \theta  \left(\|x^{k}-\wxx^{k}\|+\|x^{k+1}-\wxx^{k+1}\|+\|\xii^{k+1}-\xii^k\| \right),
	\end{align*}
	which completes the proof.
\end{proof}

For notational simplicity, we introduce some new symbols. For sequence $\{{\wo^k}\}_{k\in \mathbb{N}}$, we denote $\wo^{\star}:=(x^{\star},\xi^{\star},x^{\star},\xi^{\star})$ as its cluster point, and the set composed of these cluster points is defined as $\wO^{\star}$. Similarly, for $\{{\omega}^k\}_{k\in \mathbb{N}}$, let $\omega^{\star}=:(x^{\star},\xi^{\star})$ be its cluster point, and we use $\Omega^{\star}$ to represent the cluster point set of $\{{\omega}^k\}_{k\in \mathbb{N}}$.

\begin{lemma}\label{lemma-dist}
	Suppose Assumptions~\ref{ass:1},~\ref{ass:2} and \ref{ass:3} hold, and the parameters are set as in \eqref{hatTheta}. Let $\{\wo^k\}_{k\in \mathbb{N}}$ be the sequence generated by Algorithm \ref{alg:proposed}. Assume that $\{\wo^k\}_{k\in \mathbb{N}}$ is bounded. 
	 Then the following statements hold.
	\begin{itemize}
		\item[\rm (i)]
		$\Omega^{\star}$ is a nonempty compact set, and $\lim_{k\rightarrow\infty}\dist(\wo^k,\wO^{\star})=\lim_{k\rightarrow\infty}\dist({\omega}^k,\Omega^{\star})=0$.
		\item[\rm (ii)]	${\widehat{\Theta}}(\cdot)$ is a constant on $\widehat{\Omega}^{\star}$.
		\item[\rm (iii)] $\widehat{\Omega}^{\star}\subseteq {\Xi({\widehat{\Theta}})}.$
	\end{itemize}
\end{lemma}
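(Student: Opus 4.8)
The plan is to prove the three assertions in order, drawing on the convergence facts of Lemma~\ref{lemma-convergence-z} and the subgradient estimate of Lemma~\ref{lemma-subgradient}, and exploiting the special limiting structure $\wo^\star=(x^\star,\xi^\star,x^\star,\xi^\star)$ that is forced by $\|x^k-\wxx^k\|\to0$ and $\|\xi^k-\xi^{k-1}\|\to0$. For (i), I would first invoke the boundedness hypothesis: since $\{\wo^k\}$ is bounded, so is $\{\omega^k\}$, whence by Bolzano--Weierstrass both cluster sets are nonempty. Writing $\Omega^\star=\bigcap_{N\ge1}\overline{\{\omega^k:k\ge N\}}$ as a nested intersection of nonempty compact sets exhibits it as a nonempty compact set. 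The limit $\dist(\omega^k,\Omega^\star)\to0$ then follows by the standard contradiction argument: if a subsequence stayed a fixed distance from $\Omega^\star$, boundedness would extract a further convergent sub-subsequence whose limit lies in $\Omega^\star$, contradicting the separation. The corresponding statement for $\wo^k$ follows because the two redundant coordinates $\wxx^k$ and $\xi^{k-1}$ collapse onto $x^k$ and $\xi^k$ in the limit by Lemma~\ref{lemma-convergence-z}(ii); this also identifies $\wO^\star$ with $\{(x^\star,\xi^\star,x^\star,\xi^\star):(x^\star,\xi^\star)\in\Omega^\star\}$.

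The heart of the matter is (ii). By Lemma~\ref{lemma-convergence-z} the values $\{\widehat{\Theta}(\wo^k)\}$ are nonincreasing and bounded below, hence converge to some limit $\widehat{\Theta}^\star$; it remains to show $\widehat{\Theta}(\wo^\star)=\widehat{\Theta}^\star$ for every cluster point. Fix a subsequence $\wo^{k_j}\to\wo^\star$. Lower semicontinuity of $f$ and of the conjugate $g^*$ gives $\liminf_j\widehat{\Theta}(\wo^{k_j})\ge\widehat{\Theta}(\wo^\star)$, so the task reduces to the reverse inequalities $\limsup_j f(x^{k_j})\le f(x^\star)$ and $\limsup_j g^*(\xi^{k_j})\le g^*(\xi^\star)$, the remaining terms being continuous and the quadratic parts of $\widehat{\Theta}$ vanishing in the limit. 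For these I would exploit the minimizing property of the subproblems with the comparison point taken at the limit. Plugging $x=x^\star$ into the optimality of the $x$-update \eqref{algeq:update_x} (with indices shifted by one) and passing to the limit, using continuity of $h^+$, $\nabla h^-$ and $\nabla\psi$ together with $\wxx^{k_j-1}\to x^\star$, $x^{k_j}-\wxx^{k_j-1}\to0$ and $\B_\psi(x^{k_j},\wxx^{k_j-1})\to0$, yields $\limsup_j f(x^{k_j})\le f(x^\star)$; plugging $\xi=\xi^\star$ into the optimality of the $\xi$-update \eqref{eq:subgx} and using $\|\xi^{k_j}-\xi^{k_j-1}\|\to0$ analogously yields $\limsup_j g^*(\xi^{k_j})\le g^*(\xi^\star)$. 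Consequently $\widehat{\Theta}(\wo^{k_j})\to\widehat{\Theta}(\wo^\star)$, and since the full sequence of values tends to $\widehat{\Theta}^\star$, every cluster point carries the same value, establishing (ii).

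Finally, (iii) follows from the subgradient estimate \eqref{dist-gap-inequality}: it furnishes $\upsilon^{k+1}\in\partial\widehat{\Theta}(\wo^{k+1})$ with $\|\upsilon^{k+1}\|\to0$ by Lemma~\ref{lemma-convergence-z}(ii). Along a subsequence $\wo^{k_j}\to\wo^\star$ we then have $\wo^{k_j}\to\wo^\star$, $\widehat{\Theta}(\wo^{k_j})\to\widehat{\Theta}(\wo^\star)$ by (ii), and $\upsilon^{k_j}\to0$, so the closedness of the graph of the limiting subdifferential (Definition~\ref{def:subdiff}(ii)) gives $0\in\partial\widehat{\Theta}(\wo^\star)$, i.e. $\wo^\star\in\Xi(\widehat{\Theta})$. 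I expect the main obstacle to lie in (ii): because $f$ and $g^*$ are merely lower semicontinuous, function values cannot be passed through the limit directly, and recovering the upper bound via the subproblem optimality requires careful index bookkeeping and a verification that every coupling term, inner product and Bregman distance either vanishes or converges along the chosen subsequence. Notably, the function-value convergence secured in (ii) is exactly what makes the closedness of the limiting subdifferential applicable in (iii), so the three parts must be proved in this order.
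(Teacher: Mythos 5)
Your proposal is correct and follows essentially the same route as the paper: part (i) by standard compactness, part (ii) by pairing lower semicontinuity of $f$ and $g^*$ with the optimality of the subproblems \eqref{eq:subgx} and \eqref{algeq:update_x} evaluated at the limit points (the paper does this with indices $k_q+1$ rather than your shift to $k_j-1$, a purely cosmetic difference), and part (iii) by the vanishing subgradients $\upsilon^{k+1}$ from Lemma~\ref{lemma-subgradient} together with closedness of the limiting subdifferential. If anything, your final remark—that the function-value convergence from (ii) is precisely what licenses invoking closedness of $\partial\widehat{\Theta}$ in (iii)—is stated more precisely than the paper's corresponding appeal to "uniform convergence" permitting the exchange of subgradient and limit.
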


\begin{proof} We prove the assertions of this lemma one by one.
	\begin{itemize}
		\item[\rm (i)] From the definition of $\wO^{\star}$ and $\Omega^{\star}$, it is trivial to get Item (i).
		\item[\rm (ii)] Let $\wo^{\star}\in \wO^{\star}$ then there exists a subsequence $\{\wo^{k_q}\}_{q\in \mathbb{N}}$ of $\{\wo^k\}_{k\in \mathbb{N}}$ converging to $\wo^{\star}$.
		 From the second conclusion of Lemma \ref{lemma-convergence-z}, we have already known
		\begin{align*}
		\lim_{q\rightarrow\infty}x^{k_{q}+1}=\lim_{q\rightarrow\infty}\wxx^{k_{q}+1}=x^{\star}\;\;,\;\lim_{q\rightarrow\infty}\xii^{k_q+1}=\xii^{\star}.
		\end{align*}
		Since $f$ is lower semicontinuous, $\widehat{\Theta}$ is also lower semicontinuous, which implies 
		\begin{align}\label{lsc}
		\widehat{\Theta}(\widehat{\omega}^{\star})\leq\lim_{q\rightarrow\infty}\inf \widehat{\Theta}(\widehat{\omega}^{k_q+1}).
		\end{align}
		From \eqref{eq:subgx} and \eqref{algeq:update_x}, letting $k=k_q$, it yields
		\begin{align*}
		g^*(\xi^{k_q+1}) - \langle \wxx^{k_q}, \xi^{k_q+1}\rangle +\frac{\beta}{2}\|\xi^{k_q+1}-\xi^{k_q}\|^2 \leq  g^*(\xi^{\star}) - \langle \wxx^{k_q}, \xi^{\star} \rangle +\frac{\beta}{2}\|\xi^{\star}-\xi^{k_q}\|^2 
		\end{align*}
	 and
		\begin{align*}
		&f(x^{k_q+1})+h^+(x^{k_q+1})-\langle x^{k_q+1}-\wxx^{k_q},\xii^{k_q+1}+\nabla h^-(\wxx^{k_q})\rangle+\tau\B_{\psi}(x^{k_q+1},\wxx^{k_q})\\
		 &\leq  f(x^{\star})+h^+(x^{\star})-\langle x^{\star}-\wxx^{k_q},\xii^{k_q+1}+\nabla h^-(x^{k_q})\rangle+\tau\B_{\psi}(x^{\star},\wxx^{k_q}).
		\end{align*}
	Combining the above two inequalities, adding $-h^-(x^{k_q+1})$ to the resulting inequality and reorganize it, we have
		\begin{align*}
		&f(x^{k_q+1})+g^*(x^{k_q+1})-\langle x^{k_q+1},\xii^{k_q+1}\rangle+h^{+}(x^{k_q+1})-h^-(x^{k_q+1})\\
		&	\leq   f(x^{\star})+g^*(\xi^{\star})-\langle \wxx^{\star},\xi^{k_q+1}\rangle+h^{+}(x^{\star})-h^-(x^{k_q+1})-\langle \wxx^{k_q},\xi^{\star}-\xi^{k_q+1}\rangle\\
		&\quad  -\langle x^{\star}-\wxx^{k_q+1}, \nabla h^-(x^{\star})\rangle+\frac{\beta}{2}\|\xi^{\star}-\xi^{k_q}\|^2-\frac{\beta}{2}\|\xi^{k_q+1}-\xi^{k_q}\|^2\\
		&\quad 	+\tau\B_{\psi}(x^{\star},\wxx^{k_q})-\tau\B_{\psi}(x^{k_q+1},\wxx^{k_q}).
		\end{align*}
	Taking limit in the above inequality, we arrive at
	\begin{align*}
		&\lim_{q\rightarrow\infty}\sup \widehat{\Theta}(\wo^{k_q+1})\\
		&= \lim_{q\rightarrow\infty}\sup \biggl\{ {\Theta}(\omega^{k_q+1})+\delta\|x^{k_q+1}-\wxx^{k_q+1}\|^2+ \eta\|\xii^{k_q+1}-\xii^{k_q}\|^2\biggr\}\\
		&\leq \lim_{q\rightarrow\infty}\sup \biggl\{f(x^{\star})+g^*(\xi^{\star})-\langle \wxx^{\star},\xi^{k_q+1}\rangle+h^{+}(x^{\star})-h^-(x^{k_q+1})-\langle \wxx^{k_q},\xi^{\star}-\xi^{k_q+1}\rangle\\
		&\quad  -\langle x^{\star}-\wxx^{k_q+1}, \nabla h^-(x^{\star})\rangle+\frac{\beta}{2}\|\xi^{\star}-\xi^{k_q}\|^2-\frac{\beta}{2}\|\xi^{k_q+1}-\xi^{k_q}\|^2\\
		&\quad 	+\tau\B_{\psi}(x^{\star},\wxx^{k_q})-\tau\B_{\psi}(x^{k_q+1},\wxx^{k_q})
		+\delta\|x^{k_q+1}-\wxx^{k_q+1}\|^2+ \eta\|\xii^{k_q+1}-\xii^{k_q}\|^2
		\biggr\}\\
		&= \widehat{\Theta}(\widehat{\omega}^{\star}) = {\Theta}(\omega^{\star}),
	\end{align*}
	where the second equality holds from the continuity of $h$ and Item (ii) of Lemma \ref {lemma-convergence-z}. Obviously, this inequality means
	\begin{align*}
	\lim_{q\rightarrow\infty}\sup \widehat{\Theta}(\wo^{k_q+1})\leq \widehat{\Theta}(\widehat{\omega}^{\star}),
	\end{align*}
	which, together with the fact \eqref{lsc}, yields
		\begin{align}\label{limit exist}
		\lim_{q\rightarrow\infty} {\widehat{\Theta}}(\wo^{k_q+1})={\widehat{\Theta}}(\wo^{\star})={\Theta}(\omega^{\star}).
		\end{align}
		Hence $\widehat{\Theta}(\widehat{\omega})$ is constant on $\wO^{\star}$.
		\item[\rm (iii)] By invoking Lemmas \ref{lemma-convergence-z} and \ref{lemma-subgradient}, we have 
		$\upsilon^{k+1}\in \partial \widehat{\Theta}(\wo^{k+1})$. Since $\{\widehat{\Theta}(\widehat{\omega}^k)\}$ converges uniformly on $\widehat{\Omega}^{\star}$, the order of subgradient and limit can be exchanged. Letting $k\to\infty$, it yields $\upsilon^{k+1}\to 0$, which immediately implies $0\in\partial \widehat{\Theta}(\wo^{\star})$ by the closedness of $\partial \widehat{\Theta}$. Hence $\wo^{\star}$ is a stationary point of $\widehat{\Theta}$.
	\end{itemize}
The proof is complete.
\end{proof}

	To end this subsection, we now show that $\{x^k\}_{k\in \mathbb{N}}$ converges to a stationary point of \eqref{priminal problem}.  

\begin{theorem}\label{convergence-theorem}
	Suppose $\widehat{\Theta}(\widehat{\omega})$ is a K{\L} function and Assumptions \ref{ass:1}-\ref{ass:3} hold. The parameters are set in \eqref{hatTheta}. Let $\{\omega^k\}_{k\in \mathbb{N}}$ be the sequence generated by Algorithm \ref{alg:proposed}. Assume $\{\omega^k\}_{k\in \mathbb{N}}$ is bounded. Then, the following statements hold.
	\begin{itemize}
		\item [\rm(i)] The sequence $\{\omega^k\}_{k\in \mathbb{N}}$ has finite length, i.e.,
		\begin{align*}
			\sum_{k=0}^{\infty}\|\omega^{k+1}-\omega^k\|<\infty.
		\end{align*}
	\item [\rm (ii)] The sequence $\{x^k\}_{k\in \mathbb{N}}$ converges to a critical point $x^{\star}$ of $\Phi(x)$.
	\end{itemize}
\end{theorem}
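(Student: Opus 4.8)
The plan is to follow the now-standard three-ingredient recipe for proving convergence of descent schemes under the K{\L} property (in the spirit of \cite{ABRS10,BST14}): a sufficient-decrease estimate, a relative-error (subgradient) bound, and a continuity/stationarity property of the limit set. All three are already in hand: Lemma~\ref{lemma-convergence-z} supplies the descent inequality \eqref{bounded-below}, Lemma~\ref{lemma-subgradient} supplies the bound \eqref{dist-gap-inequality} on $\dist(0,\partial\widehat{\Theta}(\wo^{k+1}))$, and Lemma~\ref{lemma-dist} shows that $\widehat{\Theta}$ is constant on the compact cluster-point set $\wO^{\star}$ with $\wO^{\star}\subseteq\Xi(\widehat{\Theta})$. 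The remaining task is to splice these together through the uniformized K{\L} property (Lemma~\ref{lem:UKL}) to obtain the finite-length conclusion (i), from which (ii) follows by a Cauchy argument.

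For (i), first note that $\{\widehat{\Theta}(\wo^k)\}$ is nonincreasing and bounded below, hence converges to some $\widehat{\Theta}^{\star}$, which by Lemma~\ref{lemma-dist}(ii) is the common value of $\widehat{\Theta}$ on $\wO^{\star}$. If $\widehat{\Theta}(\wo^{k_0})=\widehat{\Theta}^{\star}$ for some finite $k_0$, then \eqref{bounded-below} forces all subsequent gaps to vanish and the claim is immediate; so I assume $\widehat{\Theta}(\wo^k)>\widehat{\Theta}^{\star}$ for all $k$. Since $\dist(\wo^k,\wO^{\star})\to 0$ by Lemma~\ref{lemma-dist}(i) and $\widehat{\Theta}(\wo^k)\downarrow\widehat{\Theta}^{\star}$, for large $k$ the iterates lie in the region where Lemma~\ref{lem:UKL} applies, giving $\phi'(\widehat{\Theta}(\wo^k)-\widehat{\Theta}^{\star})\,\dist(0,\partial\widehat{\Theta}(\wo^k))\ge 1$. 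I would then combine the concavity inequality $\phi(a)-\phi(b)\ge\phi'(a)(a-b)$ (applied to $a=\widehat{\Theta}(\wo^k)-\widehat{\Theta}^{\star}$, $b=\widehat{\Theta}(\wo^{k+1})-\widehat{\Theta}^{\star}$) with the descent bound \eqref{bounded-below} and the subgradient bound \eqref{dist-gap-inequality} (indexed at $k$) to reach, for some constant $c>0$,
\[
\|x^k-\wxx^k\|^2+\|x^{k+1}-\wxx^{k+1}\|^2+\|\xii^k-\xii^{k+1}\|^2\le c\,\Delta_k\bigl(\|x^{k-1}-\wxx^{k-1}\|+\|x^k-\wxx^k\|+\|\xii^k-\xii^{k-1}\|\bigr),
\]
where $\Delta_k:=\phi(\widehat{\Theta}(\wo^k)-\widehat{\Theta}^{\star})-\phi(\widehat{\Theta}(\wo^{k+1})-\widehat{\Theta}^{\star})$. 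Taking square roots and applying the weighted Young inequality $2\sqrt{ab}\le a+b$ bounds $\|x^{k+1}-\wxx^{k+1}\|$ and $\|\xii^{k+1}-\xii^k\|$ by a telescoping multiple of $\Delta_k$ plus a small fraction of the gaps at steps $k$ and $k-1$; summing over $k$ and absorbing the gap terms (whose net coefficient is strictly below $1$) yields $\sum_k\|x^{k+1}-\wxx^{k+1}\|<\infty$ and $\sum_k\|\xii^{k+1}-\xii^k\|<\infty$. Finally, the identity $\|x^{k+1}-x^k\|^2=\alpha_{k+1}^{-2}\|\wxx^{k+1}-x^{k+1}\|^2$ together with $\alpha_{k+1}\ge\alpha_{\min}$ gives $\sum_k\|x^{k+1}-x^k\|<\infty$, hence $\sum_k\|\omega^{k+1}-\omega^k\|<\infty$, proving (i).

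For (ii), finite length makes $\{\omega^k\}$ a Cauchy sequence, so $x^k\to x^{\star}$ and $\xii^k\to\xii^{\star}$; since $\|x^k-\wxx^k\|\to 0$ we also have $\wxx^k\to x^{\star}$, whence $\wo^k\to\wo^{\star}=(x^{\star},\xii^{\star},x^{\star},\xii^{\star})$. By Lemma~\ref{lemma-dist}(iii), $0\in\partial\widehat{\Theta}(\wo^{\star})$. I then read off the blocks of $\partial\widehat{\Theta}$ at this diagonal limit, where the coupling terms $2\delta(x^{\star}-\wxx^{\star})$ and $2\eta(\xii^{\star}-\zeta^{\star})$ vanish because $\wxx^{\star}=x^{\star}$ and $\zeta^{\star}=\xii^{\star}$: the $\xii$-block gives $x^{\star}\in\partial g^*(\xii^{\star})$, equivalently $\xii^{\star}\in\partial g(x^{\star})$ by the conjugacy equivalence recorded after \eqref{eq:FY}, while the $x$-block gives $\xii^{\star}-\nabla h^+(x^{\star})+\nabla h^-(x^{\star})\in\partial f(x^{\star})$, i.e. $\xii^{\star}\in\partial f(x^{\star})+\nabla h(x^{\star})$. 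Combining the two inclusions eliminates $\xii^{\star}$ and produces $0\in\partial f(x^{\star})-\partial g(x^{\star})+\nabla h(x^{\star})$, which is precisely the critical-point condition \eqref{def:cp}; hence $x^{\star}\in\crit{\Phi}$.

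I expect the main obstacle to be the index bookkeeping in the finite-length estimate: matching the squared gaps produced by \eqref{bounded-below} against the first-power gaps at the shifted indices produced by \eqref{dist-gap-inequality}, and choosing the Young-inequality weights so that the previous-gap terms are recaptured with a coefficient strictly below $1$ after summation. The dichotomy on whether $\widehat{\Theta}(\wo^k)$ attains $\widehat{\Theta}^{\star}$ in finitely many steps, and the verification that the tail genuinely enters the uniformized K{\L} neighborhood, are the remaining points requiring care, though both are routine given Lemmas~\ref{lemma-convergence-z} and \ref{lemma-dist}.
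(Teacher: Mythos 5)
Your proposal is correct and follows essentially the same route as the paper's proof: the same dichotomy on whether $\widehat{\Theta}(\wo^k)$ reaches its limit value in finitely many steps, the same splicing of the descent estimate \eqref{bounded-below}, the subgradient bound \eqref{dist-gap-inequality}, and the uniformized K{\L} property via concavity of $\phi$ and a Young-inequality absorption (the paper likewise requires $\gamma>\sqrt{3}/2$), and the same block-wise reading of $0\in\partial\widehat{\Theta}(\wo^{\star})$ combined with conjugate duality to obtain $\xi^{\star}\in\partial g(x^{\star})\cap\left\{\partial f(x^{\star})+\nabla h(x^{\star})\right\}$ for part (ii). The only deviation is harmless index bookkeeping (you invoke the K{\L} and subgradient bounds at index $k$ rather than $k+1$), which changes nothing of substance.
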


\begin{proof} The proof is divided into two parts.
\begin{itemize}
	\item [\rm (i)]
To prove Item (i), we further consider two cases based on \eqref{limit exist}. 
	\begin{itemize}
		\item[\rm (a)]If there exists a positive integer $k_1$ such that, for any $k>k_1$, ${\widehat{\Theta}}(\wo^{k})={\widehat{\Theta}}(\wo^{\star})$. Then from Item (i) of Lemma \ref{lemma-convergence-z}, it is clear that
		\begin{align*}
		&\sigma\left(\|x^k-\wxx^k\|^2+\|x^{k+1}-\wxx^{k+1}\|^2+\|\xii^{k}-\xii^{k+1}\|^2\right)
		\nonumber\\
		&\qquad \leq \widehat{\Theta}(\wo^{k})-\widehat{\Theta}(\wo^{k+1})\leq \widehat{\Theta}(\wo^{\star})-\widehat{\Theta}(\wo^{\star})=0.
		\end{align*}
		We can immediately get $x^{k+1}=\wxx^{k+1}$ for all $k>k_1$, it follows $x^{k+1}=x^k$. Then, 
		\begin{align}\label{sequence-convergence}
			\sum_{k=0}^{\infty}\|\omega^{k+1}-\omega^k\|\leq \sum_{k=0}^{\infty} \left\{\|x^{k}-x^{k+1}\|+\|\xii^k-\xii^{k+1}\|\right\}<\infty
		\end{align}
		holds for all $k$.
		Hence the conclusion is proved.

		\item[\rm (b)]Now, we assume ${\widehat{\Theta}}(\wo^k)>{\widehat{\Theta}}(\wo^{\star})$
		 for all $k>0$. Since \eqref{limit exist} implies
		that for any $\hat{\zeta}>0$, there exists a positive integer $k_2$ such that $\widehat{\Theta}(\wo^{k+1})<\widehat{\Theta}(\wo^{\star})+\hat{\zeta}$ for all $k>k_2$. It follows from Item (i) of Lemma \ref{lemma-dist} that, for any $\varsigma  >0$, there exists a positive integer $k_3$ such that $\dist(\wo^{k+1},\wO^{\star})<\varsigma$ for all $k>k_3$. Now we choose $d=\max\{k_2,k_3\}$, then for any $k>d$, we have 
		\begin{align*}
		\dist(\wo^{k+1},\wO^{\star})<\varsigma\quad \text{and}\quad \widehat{\Theta}(\wo^{\star})<\widehat{\Theta}(\wo^{k+1})<\widehat{\Theta}(\wo^{\star})+\hat{\zeta}.
		\end{align*}
		In addition, $\wO^{\star}$ is a nonempty compact set and $\widehat{\Theta}$ is constant over such a set. Then, the condition of uniformized K{\L} property (Lemma \ref{lem:UKL}) is satisfied. Consequently, we deduce that for any $k>d$,
		\begin{align}\label{KL-inequality}
		\phi^{\prime}(\widehat{\Theta}(\wo^{k+1})-\widehat{\Theta}(\wo^{\star}))\dist(0,\partial \widehat{\Theta}(\wo^{k+1}))\geq 1.
		\end{align}
		According to Lemma \ref{lemma-subgradient}, we have
		\begin{align}\label{dist-inequality-gap}
		\dist(0,\partial \widehat{\Theta}(\wo^{k+1}))\leq\ \theta  \big(\|x^{k}-\wxx^{k}\|+\|x^{k+1}-\wxx^{k+1}\|+\|\xii^{k+1}-\xii^k\| \big).
		\end{align}
		It follows from the concavity of $\phi$ that
		\begin{align}\label{concave inequality}
		&\phi\big(\widehat{\Theta}(\wo^k)-\widehat{\Theta}(\wo^{\star})\big)-\phi\big(\widehat{\Theta}(\wo^{k+1})-\widehat{\Theta}(\wo^{\star})\big)\nonumber \\
		&\;\;\geq \phi^{\prime}\big(\widehat{\Theta}(\wo^k)-\widehat{\Theta}(\wo^{\star})\big)\big(\widehat{\Theta}(\wo^k)-\widehat{\Theta}(\wo^{k+1})\big).
		\end{align}
		We denote $\triangle_k=	\phi\big(\widehat{\Theta}(\wo^k)-\widehat{\Theta}(\wo^{\star})\big)-\phi\big(\widehat{\Theta}(\wo^{k+1})-\widehat{\Theta}(\wo^{\star})\big)$ for simplicity. Associating \eqref{KL-inequality}, \eqref{dist-inequality-gap} with \eqref{concave inequality}, it follows that
		\begin{align*}
		\widehat{\Theta}(\wo^{k})-\widehat{\Theta}(\wo^{k+1})\leq\theta\triangle_k \big(\|x^{k}-\wxx^{k}\|+\|x^{k+1}-\wxx^{k+1}\|+\|\xii^{k+1}-\xii^k\| \big).
		\end{align*}
		From \eqref{bounded-below}, we can further obtain
		\begin{align}\label{sqrt-0}
		&\left(\|x^k-\wxx^k\|^2+\|x^{k+1}-\wxx^{k+1}\|^2+\|\xii^{k}-\xii^{k+1}\|^2\right)\nn\\
		&\leq \frac{\theta}{\sigma}\triangle_k \big(\|x^{k}-\wxx^{k}\|+\|x^{k+1}-\wxx^{k+1}\|+\|\xii^{k+1}-\xii^k\| \big).
		\end{align}
		Using the fact $\frac{1}{\sqrt{n}}(a_1+\dots+a_n)\leq (a_1^2+\dots+a_n^2)^{\frac{1}{2}}$ and $\sqrt{ab}\leq\frac{1}{2}(\gamma a+\frac{b}{\gamma})$ for $a,b,\gamma>0$,  we then have
		\begin{align}\label{sqrt-1}
		&\left(\frac{\theta}{\sigma}\triangle_k \big(\|x^{k}-\wxx^{k}\|+\|x^{k+1}-\wxx^{k+1}\|+\|\xii^{k+1}-\xii^k\| \big)\right)^\frac{1}{2}\nonumber\\
		&\leq\frac{1}{2}\left( \frac{\theta\gamma}{\sigma}\triangle_k+ \frac{1}{\gamma}\big(\|x^{k}-\wxx^{k}\|+\|x^{k+1}-\wxx^{k+1}\|+\|\xii^{k+1}-\xii^k\| \big)\right), 
		\end{align}
		and
		\begin{align}\label{sqrt-2}
		&{\frac{1}{\sqrt{3}}}\left(\|x^k-\wxx^k\|+\|x^{k+1}-\wxx^{k+1}\|+\|\xii^{k}-\xii^{k+1}\|\right)\nonumber\\
		&\leq \left(\|x^k-\wxx^k\|^2+\|x^{k+1}-\wxx^{k+1}\|^2+\|\xii^{k}-\xii^{k+1}\|^2\right)^{\frac{1}{2}}.
		\end{align}
	 Now, by taking the square root of both sides of inequality \eqref{sqrt-0}, and substituting \eqref{sqrt-1} and \eqref{sqrt-2} into it, we obtain
		\begin{align}\label{convergence-gap}
		\left(1-\frac{\sqrt 3}{2\gamma}\right)\left(\|x^k-\wxx^{k}\|+\|\xii^k-\xii^{k+1}\|+\|x^{k+1}-\wxx^{k+1}\|\right)\leq\frac{\sqrt 3}{2}\frac{\theta\gamma}{\sigma}\triangle_k.
		\end{align}
		Summing \eqref{convergence-gap} from $k=N$ to $\infty$ immediately yields
	\begin{align}\label{thineq}
		&\left(1-\frac{\sqrt 3}{2\gamma}\right)\sum_{k=N}^{\infty}\left(\|x^k-\wxx^{k}\|+\|\xii^k-\xii^{k+1}\|+\|x^{k+1}-\wxx^{k+1}\|\right) \nn\\
		&\leq\frac{\sqrt 3}{2}\frac{\theta\gamma}{\sigma}\phi\big(\widehat{\Theta}(\wo^N)-\widehat{\Theta}(\wo^{\star})\big).
		\end{align}
		Due to the arbitrariness of $\gamma$, we just require $\gamma>\frac{\sqrt{3}}{2}$, which immediately ensures $\left(1-\frac{\sqrt 3}{2\gamma}\right)>0$. As a consequence, the boundedness of the right hand side of \eqref{thineq} leads to
		\begin{align*}
		\left(1-{\frac{\sqrt 3}{2\gamma}}\right)\sum_{k=N}^{\infty}\big(\|x^k-\wxx^{k}\|+\|\xii^k-\xii^{k+1}\|+\|x^{k+1}-\wxx^{k+1}\|\big)<\infty,
		\end{align*}
		which means $\displaystyle\sum_{k=N}^{\infty}\|\xii^k-\xii^{k+1}\|<\infty$ and $\displaystyle\sum_{k=N}^{\infty}\|x^{k+1}-\wxx^{k+1}\|<\infty$.
		Invoking the updating scheme of $\wxx^{k}$ in \eqref{extrastep}, we have 
		 $$\wxx^{k+1}-x^{k+1}=\alpha_{k+1}(x^{k+1}-x^k),\quad \alpha_{k+1}\in (0,1].$$
		Therefore, we can easily obtain $\displaystyle\sum_{k=N}^{\infty}\|x^{k+1}-x^{k}\|<\infty$. From \eqref{sequence-convergence}, we finally draw the conclusion.
	\end{itemize}
\item [\rm (ii)] Now, we aim to show $x^\star\in\crit{\Phi}$. Since sequence $\{\omega^k\}_{k\in \mathbb{N}}$ is convergent, which implies $\{\widehat{\omega}^k\}_{k\in \mathbb{N}}$ and $\{x^k\}_{k\in \mathbb{N}}$ are also convergent. In view of Item (iii) of Lemma \ref{lemma-dist}, from $0\in\partial \widehat{\Theta}(\wo^{\star})$, we have
\begin{equation}\label{optcon}
	\left\{\begin{aligned}
		&x^{\star}\in\partial g^*(\xi^{{\star}}),\\
		&\xii^{{\star}}\in \partial f(x^{{\star}})+\nabla h^+(x^{{\star}})-\nabla h^-(x^{{\star}}).
	\end{aligned}\right.
\end{equation}
Since $x^{\star}\in\partial g^*(\xii^{{\star}})$, by the properties of conjugate functions and the continuity of $g$, we have $\xii^{\star}\in\partial g(x^{\star})$, which together with \eqref{optcon} implies that
$$\left\{\partial f(x^\star)+\nabla h(x^\star)\right\}\cap \partial g(x^\star)=\xi^\star\neq \emptyset,$$
that is,
\begin{align*}
	0\in \partial f(x^{{\star}})-\partial g(x^{\star})+\nabla h^+(x^{{\star}})-\nabla h^-(x^{{\star}}),
\end{align*}
which means that $x^{\star}$ is a critical point of \eqref{priminal problem}.
\end{itemize}
The proof is complete.
\end{proof}

\section{Numerical experiments}\label{Sec:Num}

In this section, we apply Algorithm \ref{alg:proposed} (iBPDCA for short) to low-rank matrix and tensor completion problems, and compare it with the classical DCA (i.e., iterative scheme \eqref{DCA-one} \cite{PDS86}) and the BPDCA (i.e., iBPDCA without the  extrapolation \eqref{extrastep}). All codes were written by Matlab 2021b and all experiments were conducted on a laptop computer with Intel (R) core (TM) i7-7500 CPU @ 2.70GHz and 8GB memory.

\subsection{Low-rank matrix completion}
In this subsection, we consider the following low-rank matrix completion problem:
\begin{align}\label{matrix-comp-model}
\min_{X \in \RR^{m\times n}}~\lambda(\|X\|_*-\|X\|_F)+\frac{1}{2}\|\mathscr{P}_{\Omega}(X-M)\|_F^2,
\end{align}
where $M$ is an incompleted matrix, $\Omega$ is a subset of the index set of entries $\{1,\dots,m\}\times\{1,\dots,n\}$, $\mathscr{P}_{\Omega}(\cdot):\RR^{m\times n}\to\RR^{m\times n}$ is the orthogonal projection so that the $ij$-th entry of $\mathscr{P}_{\Omega}(M)$ is $M_{ij}$ if $(i,j)\in\Omega$ and zero otherwise.

We can  easily reformulate model (\ref{matrix-comp-model}) as a special case of ($\ref{priminal problem}$) by setting $x=X$, $$f(X)=\lambda\|X\|_*,~g({X}) = \lambda\|X\|_F,~h^+({X})=\frac{1}{2}\|\mathscr{P}_{\Omega}(X-M)\|_F^2, \;\; \text{and}\;\; h^-({X})=0.$$
Due to the appearance of $\mathscr{P}_\Omega$, we take the Bregman kernel function as $\psi_k(\cdot) = \frac{1}{2}\|\cdot\|_{M_k}^2$, where $M_k = \mu_k I-\mathscr{P}_\Omega^\top\mathscr{P}_\Omega$ with some appropriate $\mu_k >0$ such that $M_k$ is positive definite.
Throughout this subsection, we set $\mu_k=1.1,\lambda = 0.5, \beta=1$ and $\tau=1$. At each iteration, we update $\alpha_k=\frac{t^{k-1}-1}{t^k}$, where $t^k=\frac{1}{2}(1+\sqrt{1+4(t^{k-1})^2})$ and $ t^0=1$. When implementing the classical DCA and BPDCA, the underlying $h(X)$ is directly set as  $h(X)=\frac{1}{2}\|\mathscr{P}_{\Omega}(X-M)\|_F^2$. Then, applying the DCA \eqref{DCA-one} to \eqref{matrix-comp-model} yields
\begin{equation}\label{DCA-MC}
X^{k+1}=\arg\min_{X}\left\{ \lambda \|X\|_* -\langle \xi^{k+1},X-X^k\rangle+\frac{1}{2}\|\mathscr{P}_{\Omega}(X-M)\|_F^2\right\}
\end{equation}	
where $\xi^{k+1}\in\partial(\lambda\|X^k\|_F)$. However, we notice that the subproblem \eqref{DCA-MC} has no
closed-form solution, which is not practically feasible. Therefore, we approximately obtain $X^{k+1}$ via the powerful alternating direction method of multipliers (ADMM \cite{GM76,GM75}), where $h(X)$ is further transformed into $h(Y)$ via an auxiliary variable $Y$ so that $\|X\|_*$ and $h(Y)$ are separable. In our experiments, we set the penalty parameter as $1.1^j$ in the $\{k,j\}$-th outer-inner iteration, and take $\|X^{k,{j}}-Y^j\|_F\leq 10^{-3}$ as the stopping criterion for ADMM, where $k$ and $j$ count the outer and inner iterations, respectively. For simplicity, we denote this algorithm by ADMM-DCA for short. The iterative scheme of BPDCA for \eqref{matrix-comp-model} reads as 
\begin{align*}
X^{k+1}&=\arg\min_{X}\left\{ \lambda \|X\|_* +\langle \nabla h(X^k)-\xi^{k+1},X-X^k\rangle+\frac{\mu_k}{2}\|X-X^k\|_F^2\right\} \\
&=\mathscr{D}_{\frac{\lambda}{\mu_k}}\left(X^k-\frac{1}{\mu_k}\left(\nabla h(X^k)-\xi^{k+1}\right)\right),
\end{align*}	
where $\xi^{k+1}$ is updated by scheme (\ref{eq:subgx}) with $\widehat{X}^k = X^k$ and $\mathscr{D}_\kappa(\cdot)$ is the well-known singular value shrinkage operator, which is defined by $\mathscr{D}_\kappa(A)=U\mathscr{D}_\kappa(\Sigma)V^\top$ with $A=U\Sigma V^\top$ being the SVD of a matrix $A\in\mathbb{R}^{m\times n}$ of rank $r$ in the reduced form, i.e., $\Sigma={\rm diag}(\{\sigma_i\}_{1\leq i\leq r})$, $U$ and $V$ are orthogonal matrices, and $\mathscr{D}_\kappa(\Sigma)=\text{diag}(\max\{\sigma_i-\kappa,0\})$ for $\kappa\geq 0$.

We first consider some synthetic datasets to evaluate the feasibility and reliability of iBPDCA. More concretely,  we generate a low-rank matrix $X_{\rm true}$ as a ground truth matrix, which follows $X_{\rm true} = UV+0.01\cdot{\rm randn}(m,n)$, where $U\in\RR^{m\times r}$, $V\in\RR^{r\times n}$ with the entries being random samples drawn from a uniform distribution, and $r$ is much less than $\min\{m,n\}$. In this way, we construct a low-rank matrix whose rank is less than $r$. In our experiments, we take $r=10$, and set $m=n=100,500$ and $1000$, respectively. Moreover, we apply Matlab script $\Omega = {\rm rand}(m,n)<{\rm sample~ratio}$ (sr), and then set the observed matrix $M=\mathscr{P}_{\Omega}(X_{\rm true})$. Throughout this subsection, we set ${\rm sr} = 0.2$ and $0.5$. We define
$$
{\rm RSE} = \frac{\|X^{*}-X_{\rm true}\|}{\|X_{\rm true}\|},
$$ to measure the quality of the recovered matrix. Moreover, we report the computing time in seconds (Time(s)), number of iteration (Iter) and the rank of the recovered matrix (rank) in Table \ref{synthetic-result}. We see that three algorithms can achieve the almost same RSE and rank, but our iBPDCA takes the least time in all cases. 
\begin{table}
	\centering
	\caption{Computational results of solving problem (\ref{matrix-comp-model}) with synthetic data}\label{synthetic-result}
	\resizebox{1\columnwidth}{!}{
		\small\begin{tabular}{cc c c c c c c c c c c c c c c c}\Xhline{1.2pt}
			\multicolumn{1}{c}{\multirow{2}{*}{sr}}&\multicolumn{2}{c}{\multirow{2}{*}{size}}&\multicolumn{4}{c}{BPDCA}&&\multicolumn{4}{c}{iBPDCA} &&\multicolumn{4}{c}{ADMM-DCA}\\
			\cmidrule{4-7}\cmidrule{9-12}\cmidrule{14-17}
			&\multicolumn{2}{c}{}&RSE&Time(s)&Iter&rank&&RSE&Time(s)&Iter&rank &&RSE&Time(s)&Iter&rank  \\
			\Xhline{1pt}
			\multicolumn{1}{c}{\multirow{2}{*}{0.5}}&	\multicolumn{2}{c}{(100,100)}&1.63e-02&0.26&121&10&&1.62e-02&0.21&87&10 &&1.73e-02&0.60&15&10\\
			&\multicolumn{2}{c}{(500,500)}&2.40e-03&13.49&192&10&&2.41e-03&5.44&76&10 &&2.45e-03&39.71&20&10\\
			&\multicolumn{2}{c}{(1000,1000)}&1.19e-03&185.00&262&10 &&1.19e-03&59.49&84&10  &&1.21e-03&540.16&27&10\\
			\Xhline{1pt}
			\multicolumn{1}{c}{\multirow{2}{*}{0.2}}&\multicolumn{2}{c}{(100,100)}&6.29e-02&0.93&387&10&&6.28e-02&0.56&207&10 &&6.48e-02&1.87&50&10\\
			&\multicolumn{2}{c}{(500,500)}&7.39e-03&33.38&432&10&&7.37e-03&10.48&124&10 &&8.47e-03&100.53&48&10\\
			&\multicolumn{2}{c}{(1000,1000)}&2.35e-02&316.92&500&10 &&3.21e-03&99.21&147&10  &&3.52e-03&1023.96&62&10\\
			\Xhline{1.2pt}
	\end{tabular}}
\end{table}

	\begin{figure}
	\includegraphics[width=1\textwidth]{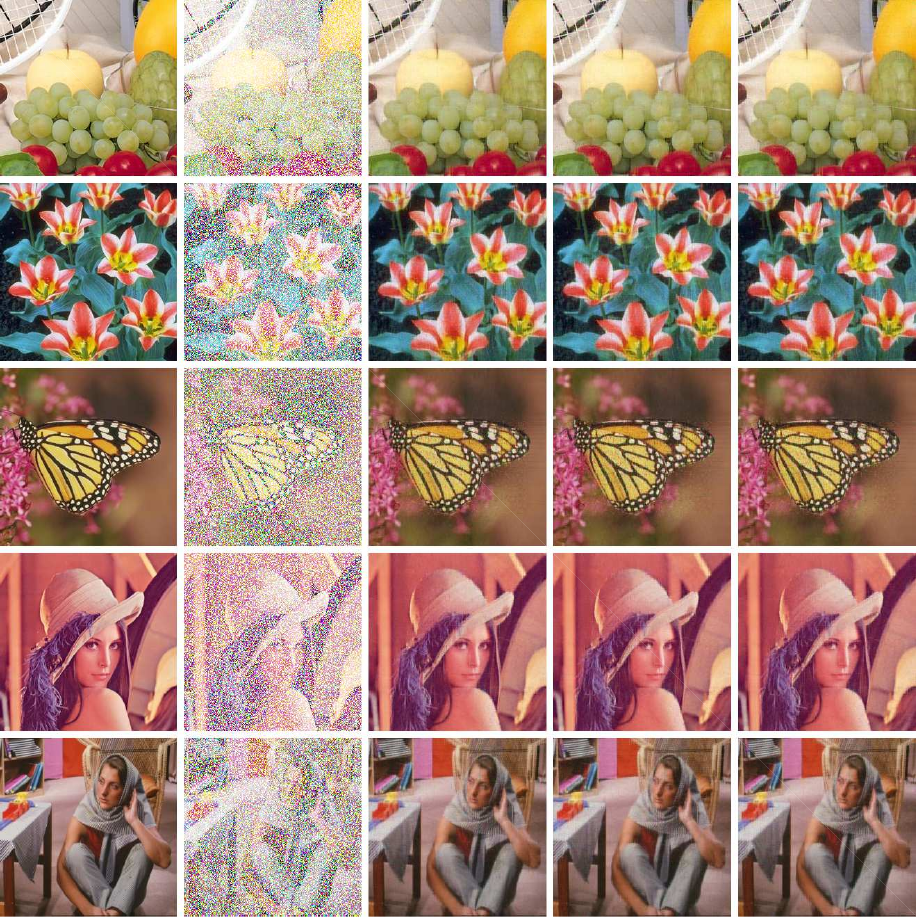}
	\caption{Images recovered by BPDCA, iBPDCA and {ADMM-DCA}. From top to bottom: fruits, tulips, butterfly, lena, and barbara. From left to right: original images, downsampled images with sr$=0.5$, image recoverd by BPDCA, iBPDCA and {ADMM-DCA}, respectively.}
	\label{image-show}
\end{figure}

\begin{table}
	\centering
	\caption{Computational results of solving problem (\ref{matrix-comp-model}) with color images}\label{image-result}
	
	\resizebox{1\columnwidth}{!}{
		\small\begin{tabular}{cc c c c c c c c c c c c c c c c}\Xhline{1.2pt}
			\multicolumn{1}{c}{\multirow{2}{*}{sr}}&\multicolumn{2}{c}{\multirow{2}{*}{image}}&\multicolumn{4}{c}{BPDCA}&&\multicolumn{4}{c}{iBPDCA} &&\multicolumn{4}{c}{ADMM-DCA}\\
			\cmidrule{4-7}\cmidrule{9-12}\cmidrule{14-17}
			&\multicolumn{2}{c}{}&PSNR&Time(s)&Iter&rank&&PSNR&Time(s)&Iter&rank &&PSNR&Time(s)&Iter&rank  \\
			\Xhline{1pt}
			&\multicolumn{2}{c}{fruits}&27.78&2.94&103&120 &&27.79&1.81&63&120  &&27.67&8.53&15&126\\
			\multicolumn{1}{c}{\multirow{2}{*}{0.5}}&	\multicolumn{2}{c}{tulips}&25.93&3.63&127&143 &&25.93&2.11&74&143  &&25.81&10.80&19&152\\
			&\multicolumn{2}{c}{buttergly}&24.90&2.63&137&119 &&24.95&1.58&80&119  &&24.87&8.99&20&126\\
			&\multicolumn{2}{c}{lena}&28.72&2.41&85&113 &&28.73&1.85&65&113 &&28.60&7.26&13&120\\
			&\multicolumn{2}{c}{barbara}&28.18&2.76&91&120 &&28.19&2.02&70&120 &&28.07&7.90&14&128\\
			\Xhline{1pt}
			&\multicolumn{2}{c}{fruits}&21.66&6.85&242&79 &&21.67&2.91&103& 79 &&21.60&19.90&34&100\\
			\multicolumn{1}{c}{\multirow{2}{*}{0.2}}&	\multicolumn{2}{c}{tulips}&21.75&8.37&238&94 &&21.75&2.50&86& 94 &&21.69&20.64&33&118\\
			&\multicolumn{2}{c}{butterfly}&19.12&8.39&295&76 &&19.12&4.20&115&76 &&19.09&28.85&43&96\\
			&\multicolumn{2}{c}{lena}&22.70&6.14&197&73 &&22.70&3.01&102&73 &&22.62&17.82&30&92\\
			&\multicolumn{2}{c}{barbara}&22.13&8.05&233&78 &&22.14&3.65&125&78 &&22.05&19.11&32&99\\
			\Xhline{1.2pt}
	\end{tabular}}
\end{table}



Hereafter, we are concerned with the numerical performance of our algorithm on real-world datasets. In our experiments, we choose five widely used color images (`fruits', `tulips', `butterfly', `lena' and `barbara') with size of  $256\times 256\times 3$. Here, we convert the original image $\X_{\rm true}\in\RR^{256\times 256\times 3}$ into a matrix $X_{\rm true}\in\RR^{256\times 768}$. To report the results, we further employ the Peak Signal-to-Noise Ratio (PSNR) defined by 
\begin{align*}
\text{PSNR} = 10\log_{10}\frac{(X_{\rm true}^{\max})^2(\#\Omega^C)}{\|X^*-X_{\text{true}}\|_F^2},
\end{align*}
to measure the quality of the recovered image, where $X_{\rm true}^{\max}$ is the maximum element of $X_{\rm true}$, where $\#\Omega^C$ denotes the number of elements
in the complementary set of $\Omega$. All results are summarized in Table \ref{image-result} and Figure \ref{image-show}.  It can be seen from Figure \ref{image-show} that the recovered images by three algorithms have almost the same quality. However, we can see from Table \ref{image-result} that BPDCA and iBPDCA perform better than {ADMM-DCA} in terms of computing time and PSNR values.

\subsection{Low-tubal-rank tensor completion}
The objective variable in model (\ref{matrix-comp-model}) is a matrix. In this subsection, we consider a more general case where the variable is a tensor in the objective function. Before our experiments, we briefly recall some basic concepts of the tensor nuclear norm (see \cite{HLX22,KM11} for more details).  We represent the tensor $\bar{\X}$ in terms of $\X$ after performing the Fast Fourier Transform (FFT) on each tube, i.e., $\bar{\X} = {\rm fft}(\X,[],3)$ and  $\X = {\rm ifft}(\bar{\X},[],3)$ in {\sc {\sc Matlab}}. We use $X_k$ (resp. $\bar{X_k}$) to denote the $k$-th frontal slice of a third-order tensor $\X\in\mathbb{R}^{n_1\times n_2\times n_3}$ (resp. $\bar{\X}\in\mathbb{R}^{n_1\times n_2\times n_3})$\footnote{ $\X(:,:,k)$ (resp. $\bar{\X}(:,:,k)$) in Matlab}.
With the above preparations, we consider the following low-tubal-rank tensor completion problem:
\begin{align}\label{tensor-comp-model}
\min_{\X\in\mathbb{R}^{n_1\times n_2\times n_3}}~\lambda(\|\X\|_{\rm TNN}-\|\X\|_F)+\frac{1}{2}\|\mathscr{P}_{\Omega}(\X-\mathcal{M})\|_F^2,
\end{align}
where $\|\X\|_{\rm TNN}: = \frac{1}{n_3}\sum_{k=1}^{n_3}\sum_{i=1}^{\min\{n_1,n_2\}}\sigma_i(\bar{X_k})$, and $  \sigma_i(\bar{X_k})$ is the $i$-th singular value of $\bar{X_k}$.  Clearly, model (\ref{tensor-comp-model}) can also be regarded as a special case of ($\ref{priminal problem}$) by setting $x=\mathscr{X}$, 
$$f({\X})=\lambda\|\X\|_*,~g({\X}) = \lambda\|\X\|_F,~h^+({\X})=0,\;\; \text{and}\;\; h^-({\X})=-\frac{1}{2}\|\mathscr{P}_{\Omega}(\X-\mathcal{M})\|_F^2.$$
The main difference between problem (\ref{tensor-comp-model}) and (\ref{matrix-comp-model}) is that the objective variable is changed from a matrix $X$ to a tensor $\X$, so the corresponding tensor nuclear norm $\|\cdot\|_{\rm TNN}$ is considered. All parameters for iBPDCA for solving (\ref{tensor-comp-model}) are same to the matrix case. Similar to model (\ref{matrix-comp-model}), we also employ the BPDCA and {ADMM-DCA} to solve problem (\ref{tensor-comp-model}). The iterative scheme of BPDCA for solving (\ref{tensor-comp-model}) can be written as follow:
\begin{align*}
\X^{k+1}&=\arg\min_{\X}\left\{ \lambda \|\X\|_{\rm TNN} +\langle \nabla h(\X^k)-\xi^{k+1},\X-\X^k\rangle+\frac{\mu_k}{2}\|\X-\X^k\|_F^2\right\} \\
&=\mathcal{U}*{\rm ifft}\left(\widehat{\mathscr{D}}_{\frac{\lambda}{\mu_k}}(\Gamma),[],3\right)*\mathcal{V}^{\hat{\top}},
\end{align*}	
where $\xi^{k+1}$ is updated by scheme (\ref{eq:subgx}) with $\widehat{\X}^k = \X^k, \X^k-\frac{1}{\mu_k}\left(\nabla h(\X^k)-\xi^{k+1}\right)=\mathcal{U}*\Gamma*\mathcal{V}^{\hat{\top}}$\footnote{$\X^{\hat{\top}}\in\RR^{n_1\times n_2\times n_3}$ is the conjugate transpose for a tensor $\X\in\RR^{n_1\times n_2\times n_3}$, obtained by conjugate transposing each of the frontal slices of $\X$ and then reversing the order of transposed frontal slices 2 through $n_3$.}, which is the tensor singular value decomposition (see \cite{KM11} for more details) with tubal rank $r$, where $\mathcal{U}\in\RR^{n_1\times r\times n_3}$ and $\mathcal{V}\in\RR^{r\times n_2\times n_3}$ are othogonal tensors and ${\Gamma}\in\RR^{r\times r\times n_3}$ is an f-diagonal tensor, and $\widehat{\mathscr{D}}_{\kappa}(\Gamma)\in\RR^{r\times r\times n_3}$ is defined by
\begin{align*}
 	\widehat{\mathscr{D}}_{\kappa}(\Gamma)_{iik}=\max\{\bar{\Gamma}_{iik}-\kappa,0\},~i = 1,\dots,r;~k=1,\dots,n_3,
 \end{align*} 
 where $\kappa\geq 0.$
When applying the DCA \eqref{DCA-one} to solve (\ref{tensor-comp-model}), we have 
\begin{equation*}
\X^{k+1}=\arg\min_{X}\left\{ \lambda \|\X\|_{\rm TNN} -\langle \xi^{k+1},\X-\X^k\rangle+\frac{1}{2}\|\mathscr{P}_{\Omega}(\X-\mathscr{M})\|_F^2\right\}
\end{equation*}	
where $\xi^{k+1}\in\partial(\lambda\|\X^k\|_F)$.  Like the matrix case, we also introduce an auxiliary variable $\mathcal{Y}=\X$ to reformulate $h(\X)$ as $h(\mathcal{Y})$. Then, we obtain $\X^{k+1}$ by the ADMM with same settings used in \eqref{DCA-MC}.

We start to apply our iBPDCA to solving problem (\ref{tensor-comp-model}) with synthetic datasets. Here, we randomly generate low-tubal-rank $\X\in\RR^{n_1\times n_2\times n_3}$ and $\Omega$ by the following way. First, we obtain $\mathscr{U}={\rm randn}(n_1,r,n_3)$ and $\mathscr{V}={\rm randn}(r,n_2,n_3)$ by Matlab. Then, we let $\X=U*V+0.01\cdot{\rm randn}(n_1,n_2,n_3)$. For $\Omega$, we apply Matlab script ${\rm rand}(n_1,n_2,n_3)<{\rm sr}$. In this part, we set $r = 5$, $n_1=n_2=\{20,50,100\}$ and $n_3=10$. In Table \ref{synthetic-result-T}, we report RSE, Time(s), Iter and the Tubal rank (Trank) of using three algorithms to solve model (\ref{tensor-comp-model}). It is easy to see that iBPDCA is faster than the other two algorithms in terms of computing time. In addition, iBPDCA can achieve lower tubal rank than {ADMM-DCA}, which, to some extent, means that iBPDCA can recover better solutions. 

\begin{table}
	\centering
	\caption{Computational results of solving problem (\ref{tensor-comp-model}) with synthetic data}\label{synthetic-result-T}
	\resizebox{1\columnwidth}{!}{
		\small\begin{tabular}{cc c c c c c c c c c c c c c c c}\Xhline{1.2pt}
			\multicolumn{1}{c}{\multirow{2}{*}{sr}}&\multicolumn{2}{c}{\multirow{2}{*}{size}}&\multicolumn{4}{c}{BPDCA}&&\multicolumn{4}{c}{iBPDCA} &&\multicolumn{4}{c}{ADMM-DCA}\\
			\cmidrule{4-7}\cmidrule{9-12}\cmidrule{14-17}
			&\multicolumn{2}{c}{}&RSE&Time(s)&Iter&Trank&&RSE&Time(s)&Iter&Trank &&RSE&Time(s)&Iter&Trank  \\
			\Xhline{1pt}
			\multicolumn{1}{c}{\multirow{2}{*}{0.5}}&	\multicolumn{2}{c}{(20,20,10)}&1.66-02&0.89&654&10&&1.64e-02&0.40&226&10 &&1.66e-02&1.51&82&11\\
			&\multicolumn{2}{c}{(50,50,10)}&3.35-03&2.53&443&6&&3.33e-03&0.76&146&5 &&3.58e-03&5.03&48&7\\
			&\multicolumn{2}{c}{(100,100,10)}&1.56e-03&11.77&600&5&&1.55e-03&3.07&161&5&&1.61e-03&27.65&62&5\\
			\Xhline{1pt}
			\multicolumn{1}{c}{\multirow{2}{*}{0.2}}&\multicolumn{2}{c}{(20,20,10)}&4.75e-02&1.27&867&6&&4.79e-02&0.37&215&6 &&4.75e-02&2.32&116&8\\
			&\multicolumn{2}{c}{(50,50,10)}&3.25e-02&5.58&1090&14&&3.32e-02&1.43&281&14 &&3.31e-02&12.47&141&18\\
			&\multicolumn{2}{c}{(100,100,10)}&7.39e-03&57.49&2945&22&&5.36e-03&7.55&375&19 &&6.01e-03&134.52&354&26\\
			\Xhline{1.2pt}
	\end{tabular}}
\end{table}

Finally, we are interested in recovering grayscale videos from incomplete observations. Here, we choose four grayscale videos\footnote{http://trace.eas.asu.edu/yuv/.} including `Airport', `Lobby', `Akiyo' and `Hall'. Some preliminary numerical results are summarized in Table \ref{video-result}. We can see that iBPDCA always outperforms BPDCA and {ADMM-DCA} which further verifies the efficiency of our algorithm. In addition,  some recovered frames by three algorithms are shown in Figure \ref{video-show}, which show that all algorithms can achieve satisfied results.  

\begin{table}
	\centering
	\caption{Computational results of solving problem (\ref{tensor-comp-model}) with video datasets}\label{video-result}
	\resizebox{1\columnwidth}{!}{
		\small\begin{tabular}{cc c c c c c c c c c c c c c c c}\Xhline{1.2pt}
			\multicolumn{1}{c}{\multirow{2}{*}{sr}}&\multicolumn{2}{c}{\multirow{2}{*}{image}}&\multicolumn{4}{c}{BPDCA}&&\multicolumn{4}{c}{iBPDCA} &&\multicolumn{4}{c}{ADMM-DCA}\\
			\cmidrule{4-7}\cmidrule{9-12}\cmidrule{14-17}
			&\multicolumn{2}{c}{}&PSNR&Time(s)&Iter&Trank&&PSNR&Time(s)&Iter&Trank &&PSNR&Time(s)&Iter&Trank  \\
			\Xhline{1pt}
			&\multicolumn{2}{c}{airport}&29.54&22.42&148&116 &&29.63&11.51&71&116 &&29.63&81.68&23&117\\
			\multicolumn{1}{c}{\multirow{2}{*}{0.5}}&	\multicolumn{2}{c}{lobby}&40.06&12.18&98&107 &&40.10&7.12&59&107 &&40.04&36.19&13&109\\
			&\multicolumn{2}{c}{akiyo}&41.13&14.32&96&101 &&41.24&8.92&60&101  &&41.11&46.59&13&102\\
			&\multicolumn{2}{c}{hall}&40.31&13.42&92&122 &&40.44&8.45&57&122  &&40.36&47.26&13&122\\
			\Xhline{1pt}
			&\multicolumn{2}{c}{airport}&23.85&45.92&308&84 &&23.96&16.75&104&84 &&23.96&174.49&53&89\\
			\multicolumn{1}{c}{\multirow{2}{*}{0.2}}&	\multicolumn{2}{c}{lobby}&35.42&29.92&254&92 &&35.60&10.79&65&92 &&35.32&90.68&33&93\\
			&\multicolumn{2}{c}{akiyo}&34.61&32.87&220&76 &&34.84&20.44&122	&76  &&34.67&107.37&30&78\\
			&\multicolumn{2}{c}{hall}&34.38&30.75&202&99 &&34.66&18.13&110&99  &&34.48&102.40&29&100\\
			\Xhline{1.2pt}
	\end{tabular}}
\end{table}

\begin{figure}
	\includegraphics[width=1\textwidth]{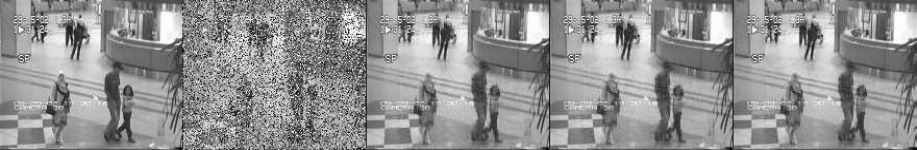}
	\includegraphics[width=1\textwidth]{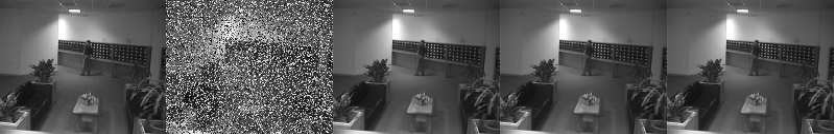}
	\includegraphics[width=1\textwidth]{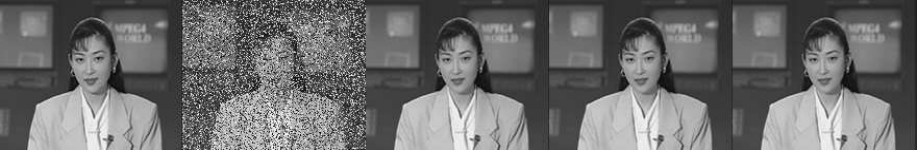}
	\includegraphics[width=1\textwidth]{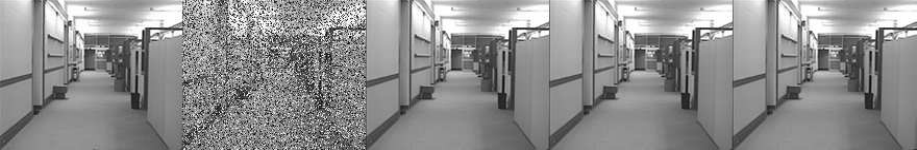}
	\caption{Some frames recovered by BPDCA, iBPDCA and {ADMM-DCA}. From top to bottom: airport, lobby, akiyo, and hall. From left to right: original frames, downsampled frames with sr$=0.5$, frames recovered by BPDCA, iBPDCA, and {ADMM-DCA}, respectively.}
	\label{video-show}
\end{figure}

\section{Conclusion}\label{Sec:Con} 
In this paper, we introduce a new efficient DC algorithm for solving a class of generalized DC programming. With the help of the Fenchel-Young inequality, we constructed a well-defined subproblem in the sense a unique solution can be determined iteratively, thereby circumventing the difficulty of selecting an ideal subgradient for algorithmic implementation. Moreover, we can gainfully exploit the possibly explicit form of the proximal operator of $g(x)$ by the extended Moreau decomposition theorem. The imposed Bregman proximal term in the $x$-subproblem also makes our algorithm versatile for designing customized algorithms. Some computational results on matrix/tensor completion problems demonstrate that our algorithm performs well in practice.

\vskip 6mm
\noindent{\bf Acknowledgements}

\noindent
The authors would like to thank the referees for their valuable comments, which helped us improve the presentation of this paper.
H.J. He was supported in part by Zhejiang Provincial Natural Science Foundation of China (No. LZ24A010001),  Ningbo Natural Science Foundation (Project ID: 2023J014) and National Natural Science Foundation of China  (No. 12371303) and. C. Ling was supported in part by National Natural Science Foundation of China (No. 11971138).


\end{document}